\newcommand{\parallelslant}{\mathbin{/\mkern-4mu/}}
\numberwithin{equation}{section}
\newtheorem{theorem}{Theorem}
\newtheorem{lemma}{Lemma}[section]
\newtheorem{prop}[lemma]{Proposition}
\newtheorem{cor}[lemma]{Corollary}
\theoremstyle{definition}
\newtheorem{remark}[lemma]{Remark}
\crefname{subsection}{Subsection}{Subsections}
\newcommand{\R}{\mathbb{R}}
\newcommand{\C}{\mathbb{C}}
\newcommand{\Z}{\mathbb{Z}}
\renewcommand{\O}{\mathrm{O}}
\newcommand{\I}{\mathrm{I}}
\newcommand{\N}{\mathbb{N}}
\newcommand{\X}{\mathbb{X}}
\newcommand{\g}{\mathfrak{g}}
\renewcommand{\o}{\mathrm{o}}
\newcommand{\GL}{\mathrm{GL}}
\newcommand{\PGL}{\mathrm{PGL}}
\newcommand{\SL}{\mathrm{SL}}
\newcommand{\gen}{\mathrm{gen}}
\newcommand{\diag}{\mathrm{diag}}
\newcommand{\aut}{\mathrm{aut}}
\newcommand{\tr}{\mathrm{tr}}
\newcommand{\supp}{\mathrm{supp}}
\newcommand{\dist}{\mathrm{dist}}
\newcommand{\Op}{\mathrm{Op}}
\newcommand{\calO}{\mathcal{O}}
\newcommand{\calW}{\mathcal{W}}
\newcommand{\cusp}{\mathrm{cusp}}
\newcommand{\Hom}{\mathrm{Hom}}
\newcommand{\vol}{\mathrm{vol}}
\renewcommand{\d}{\,\mathrm{d}}
\renewcommand*{\showkeyslabelformat}[1]{%
\fbox{\vbox{\hsize=1.1cm\normalfont\small\url{#1}\par}}}
\title[Sarnak's density hypothesis in the spectral aspect]{On the spectral aspect density hypothesis and application}
\author{Edgar Assing}
\address{Mathematical Institute of the University of Bonn, Endenicher Allee 60, D–53115 Bonn, Germany}
\email{assing@math.uni-bonn.de}
\author{Subhajit Jana}
\address{Queen Mary University of London, Mile End Road, London E1 4NS, UK.}
\email{s.jana@qmul.ac.uk}
\thanks{The first author is supported by the Germany Excellence Strategy grant EXC-2047/1-390685813 and also partially funded by the Deutsche Forschungsgemeinschaft (DFG, German Research Foundation) – Project-ID 491392403 – TRR 358.}
\begin{document}

\begin{abstract}
    We prove that the density of non-tempered (at any $p$-adic place) cuspidal representations for $\mathrm{GL}_n(\mathbb{Z})$, varying over a family of representations ordered by their infinitesimal characters, is small -- confirming Sarnak's density hypothesis in this set-up. Among other ingredients, the proof uses tools from microlocal analysis for Lie group representations as developed by Nelson and Venkatesh. As an application, we prove that the Diophantine exponent of the $\mathrm{SL}_n(\mathbb{Z}[1/p])$-action on $\mathrm{SL}_n(\mathbb{R})/\mathrm{SO}_n(\mathbb{R})$ is \emph{optimal} -- resolving a conjecture of Ghosh, Gorodnik, and Nevo.
\end{abstract}

\maketitle

\section{Introduction}

The Generalized Ramanujan Conjecture (GRC) has many important applications in analytic number theory. However, since a complete proof of the GRC seems to be illusive with current technology, many arithmetic results only remain conditional. A good approximation to the GRC, which is more tractable using tools from the spectral theory of automorphic forms, is Sarnak's density hypothesis. In fact, this hypothesis turns out to be sufficient for many practical purposes. More precisely, the density hypothesis is a quantitative statement about the rareness of violations of the GRC in families of automorphic representations. The underlying family is usually dictated by the application at hand.

For example, the resolutions of the \textit{uniform counting} conjecture and the \textit{optimal lifting} conjecture require a density hypothesis for a certain level-aspect family determined by the principal congruence subgroup. These density results have recently been proved in \cite{assing2024principal, assing2024density} building on earlier work \cite{blomer2019density} for the Hecke congruence subgroup. On the other hand, the question regarding the \textit{optimal Diophantine exponent} (see Section~\ref{sec:optimal} for details) requires a certain density hypothesis for a spectral-aspect family. This was pointed out in \cite{jana2024optimal}. The main goal of this paper is to prove a density hypothesis which, in consequence, yields the required one for the optimal exponent conjecture.

The natural starting point for proving any density result is a suitable spectral summation formula. A key step in this approach is to derive strong enough estimates for (twisted relative) orbital integrals. This is a local problem, which is particularly difficult in the archimedean aspect. Approaching these questions using the Selberg Trace formula has seen some success for $\GL_2$; most notably see \cite{fraczyk2024density}. However, for higher rank these techniques often fall short of the density hypothesis; see \emph{e.g.}, \cite{matz2021sato-tate}. On the other hand, much of the recent progress relies on the use of the Kuznetsov formula instead, as pioneered by Iwaniec in \cite{iwaniec1990small}. Following this approach the density hypothesis for a spectral family in its full strength (and beyond) was first established for $\GL_3$ in \cite{blomer2014sato}; see also \cite[Proposition~4.15]{jana2024optimal}. For $\GL_n$ with $n\ge 4$ the only available result in the archimedean aspect is obtained in \cite{jana2021application}, where the density hypothesis is established for the archimedean conductor family.

In this paper, we study the spectral family $\Omega_{\textrm{cusp}}(X)$ of all (cuspidal) automorphic representations with spectral parameter in a (growing) ball of radius $X$; see \eqref{eq:def_family} below for a precise definition, and the corresponding density theorem is Theorem~\ref{th:main_density} below. Working with this family turns out to be much more difficult than working with conductor families as in \cite{blomer2019density, jana2021application}; see end of \S\ref{sec:set-up-proof} for a discussion on this. In fact, we encounter difficulties along the lines of \cite{assing2024principal, assing2024density} from where we borrow some ideas. We refer to Section~\ref{sec:method} below for a explanation of our method and its relation to the non-archimedean situation.

\subsection{The main result}

Let $n\ge 2$ and $\X:=\X_n:=\PGL_n(\Z)\backslash \PGL_n(\R)$. For $X\ge 1$, let $\Omega_{\textrm{cusp}}(X)$ denote the isomorphism class of cuspidal automorphic representations $\pi$ that appear in the spectral decomposition of the space $L^2(\X)$ and whose (archimedean) spectral parameters $\mu_{\infty}(\pi)$ lie in a ball of radius $X$. It is important to note that the family $\Omega_{\textrm{cusp}}(X)$ contains non-spherical constituents. Recall from \cite{muller2007weyl} that the Weyl law for the spherical sub-family reads
\begin{equation}
    \# \{ \pi \in \Omega_{\textrm{cusp}}(X) \colon \pi \text{ is spherical}\} \asymp X^{\frac{(n+2)(n-1)}{2}}.\nonumber
\end{equation}
The full family $\Omega_{\textrm{cusp}}(X)$ is strictly larger, but its size is of the same order of magnitude. Indeed, one can verify the essentially sharp upper bound
\begin{equation}
    \# \Omega_{\textrm{cusp}}(X) \ll_\epsilon X^{\frac{(n+2)(n-1)}{2}+\epsilon}. \label{eq:weyl}
\end{equation}

Given a prime $p$, for each $\pi$ we associate the $p$-adic Satake parameters $$\mu_p(\pi):=\{\mu_p(\pi)_1,\ldots,\mu_p(\pi)_n\}.$$ This is a (multi)-set of complex numbers uniquely determined by the local $L$-factors $L_p(s,\pi)$ of $\pi$ at $p$. Since $\pi$ is assumed to be cuspidal the GRC predicts that $\pi$ is everywhere tempered. In particular, we expect that the elements of $\mu_p(\pi)$ are purely imaginary. Our goal is to quantify the statement that 
$$\textit{cuspidal representations that are highly non-tempered at $p$ are rare}.$$
To do so, we first introduce the a measurement of non-temperedness, namely,
\begin{equation}
    \sigma_p(\pi) := \max_{i=1,\ldots, n} \vert \Re(\mu_p(\pi)_i)\vert.\nonumber
\end{equation}
With this notation at hand, we are ready to state our main theorem, which is an archimedean analogue of \cite[Theorem~1.1]{assing2024density}, anticipated in \cite[Section~1.5 Point (3)]{assing2024density}.

\begin{theorem}\label{th:main_density}
Fix $n\geq 4$. Let $p$ be any prime and $\sigma\ge 0$. We have
\begin{equation}
    \# \{\pi\in \Omega_{\mathrm{cusp}}(X)\colon  \sigma_p(\pi)\geq \sigma\} \ll_{p,\epsilon} X^{\frac{1}{2}(n+2)(n-1) \left(1-\frac{4\sigma}{n-1}\right)+\epsilon}. \nonumber
    \end{equation}
for all sufficiently large $X$.
\end{theorem}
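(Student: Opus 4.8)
The plan is to prove Theorem~\ref{th:main_density} via a spectral summation over the family $\Omega_{\mathrm{cusp}}(X)$, using a Kuznetsov-type formula together with an amplification that is sensitive to non-temperedness at $p$. I would first fix a test function on the archimedean side whose spectral transform is essentially the indicator of the ball of radius $X$ (a smooth bump adapted to the Weyl-law region), so that the geometric side of the Kuznetsov formula becomes a sum of Kloosterman-type terms weighted by an archimedean kernel. On the $p$-adic side, I would insert an amplifier: for $\pi$ with $\sigma_p(\pi)\ge\sigma$, the Satake parameters force an exponential growth $\gg p^{k\sigma}$ in the Hecke eigenvalue of a degree-$k$ Hecke operator (or of $p^k$-th power Hecke operators), so the left-hand side, after dividing by this amplification, detects the non-tempered $\pi$ with a gain of roughly $p^{-2k\sigma}$ relative to the trivial count. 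Concretely I would use the Hecke operator $T_{p^k}$ (or a Satake-optimal combination) whose eigenvalue on a $\sigma$-non-tempered $\pi$ is $\gg_{p} p^{k\sigma(n-1)/?}$ — the exact exponent being dictated by matching the archimedean Weyl exponent $\tfrac12(n+2)(n-1)$ against the claimed saving $\tfrac{4\sigma}{n-1}$, which suggests the relevant normalization is $p^{2k\sigma}$ per unit of $k$ after accounting for the dimension of the relevant $K$-type sphere.

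The core of the argument is then a bound for the geometric side: the diagonal term contributes the main term of size $X^{\frac12(n+2)(n-1)+\epsilon}$ times the length of the amplifier, and the off-diagonal Kloosterman terms must be shown to contribute at most the same order (so that the whole right-hand side is $\ll X^{\frac12(n+2)(n-1)+\epsilon}\cdot(\text{amplifier length})$). This requires, as the introduction flags, strong estimates for the archimedean (twisted relative) orbital integrals attached to the Kuznetsov kernel for $\GL_n$, $n\ge 4$ — here I would invoke the microlocal-analytic machinery of Nelson–Venkatesh to get the needed decay of the archimedean Bessel-type kernel as the moduli of the Kloosterman sums grow, combined with the best available bounds for $\GL_n$ Kloosterman sums (Friedberg / Stevens-type, or trivial bounds where they suffice). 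Balancing the amplifier length $L=p^k$ against these two contributions, and using that the non-tempered $\pi$ are counted with weight $\gg (p^{k\sigma})^{2}$ while appearing in a total mass of order $X^{\frac12(n+2)(n-1)+\epsilon} L^{A}$ for the appropriate power $A$ tied to the $\GL_n$ Kloosterman/orbital-integral geometry, optimizing over $k$ yields the exponent $\tfrac12(n+2)(n-1)(1-\tfrac{4\sigma}{n-1})$.

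Since the family here is the \emph{full} $\Omega_{\mathrm{cusp}}(X)$ rather than a spherical or conductor family, two technical points need care and I would handle them up front. First, $\Omega_{\mathrm{cusp}}(X)$ contains non-spherical constituents, so the spectral side of the Kuznetsov formula must be set up with the right archimedean test vectors (choosing, for each minimal $K$-type, generic vectors so that the Whittaker normalization is uniform); this is exactly where the difficulties ``along the lines of \cite{assing2024principal, assing2024density}'' enter, and I would import their device of summing over a basis of $K$-types with controlled multiplicities, using \eqref{eq:weyl} to keep the total count in check. Second, the positivity needed to drop all but the non-tempered terms requires that the amplified spectral sum have a nonnegative kernel after the archimedean localization; I would arrange this by taking the archimedean test function to be of the form $\phi\ast\phi^*$ (so its spectral transform is a nonnegative bump) at the cost of enlarging the ball by a constant factor, which is harmless.

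The step I expect to be the main obstacle is the archimedean orbital integral / Bessel kernel estimate for $\GL_n$ with $n\ge 4$ in this spectral-aspect setup: unlike the conductor-aspect families of \cite{jana2021application, blomer2019density}, here the archimedean parameter is the large one and the oscillatory integrals governing the Kloosterman contribution are genuinely higher-rank and do not reduce to classical Bessel functions. Getting decay that is strong enough — uniformly in the moduli and in the amplifier — to beat the main term is precisely what the Nelson–Venkatesh microlocal calculus is being brought in for, and making that calculus interface cleanly with the Kuznetsov geometric side (matching cells in the Bruhat decomposition, tracking the relevant tori, and controlling the stationary-phase expansions of the resulting integrals) is the crux of the whole proof. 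Everything else — the Hecke-algebra bookkeeping for the amplifier, the Weyl-law input \eqref{eq:weyl}, and the final optimization in $k$ — is comparatively routine once that estimate is in hand.
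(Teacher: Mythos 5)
Your proposal has a genuine gap, and it lies exactly where you locate the ``main obstacle.'' You choose the archimedean test function spectrally (a bump whose transform is essentially the indicator of the ball of radius $X$) and then plan to beat the geometric side by proving decay of the resulting archimedean Bessel/orbital kernels, invoking the Nelson--Venkatesh microlocal calculus for that purpose. This is precisely the ``classical'' route the paper identifies and abandons: for $\GL_n$ with $n\ge 4$ the orbital integrals attached to such a spectrally-defined kernel involve Iwasawa projections on general Bruhat cells and are not tractable, and the microlocal calculus does not supply the needed uniform decay of those Kloosterman-side integrals -- it controls representation-theoretic quantities (operators $\Op_{1/X}(a:\chi)$, traces, Bessel distributions, microlocalized Whittaker vectors), not the oscillatory integrals $\mathcal{O}_{\boldsymbol{\psi}}(F,\widetilde{\mathbf{m}}c^{\ast}w\widetilde{\mathbf{m}}^{-1})$ for a bump with large support. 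The paper turns the problem around: the test function is chosen \emph{geometrically}, $F_X=f_X\ast f_X^{\vee}$ with $f_X$ supported in an archimedean analogue $K(X,\tau)$ of a principal congruence subgroup (built through the operator assignment of the microlocal calculus), so that on the geometric side only the trivial Weyl element and $w_{\ast}$ survive for $m\le X^{n+2}$ and everything is estimated trivially from the support and sup-norm of $F_X$. The analytic difficulty is thereby transferred entirely to the spectral side, where the microlocal machinery is actually applicable: one must prove the lower bound $J_{\pi,\boldsymbol{\psi}_X}(F_X)\gg X^{\frac12 n(n-1)}$ for all generic $\pi$ with $\Vert\mu_\infty(\pi)\Vert\le X$, which is done by producing a Whittaker vector microlocalized at the regular element $\tau(\pi,\boldsymbol{\psi},X)$ and showing $\pi(f_X)W(1)=(1+o(1))W(1)$. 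Your sketch contains no substitute for this lower bound, and without it the Kuznetsov spectral side cannot be related to the count of $\pi\in\Omega_{\cusp}(X)$.

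A second, related gap is quantitative: the exponent $\frac{4\sigma}{n-1}$ is not obtained by ``optimizing over $k$'' in an amplifier, but is forced by the maximal admissible Hecke length. The paper proves $\sum_{\pi\in\Omega_{\cusp}(X)}|\lambda_\pi(p^l)|^2\ll X^{\frac12(n+2)(n-1)+\epsilon}$ precisely in the range $p^l\le X^{n+2}$, because the support analysis (Lemma on admissible moduli, giving $|c_j|<m/X^{n+2}$) kills all nontrivial Weyl terms exactly up to that threshold; then $p^{2l\sigma_p(\pi)}\approx X^{2(n+2)\sigma}$ and Rankin's trick yields the saving $2\sigma(n+2)=\tfrac12(n+2)(n-1)\cdot\tfrac{4\sigma}{n-1}$. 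In your write-up this range is never derived -- the exponent is reverse-engineered from the statement (``the exact exponent being dictated by matching\dots'') -- so even granting your geometric-side decay, the argument as proposed does not produce the claimed bound. The positivity and non-spherical issues you flag are handled in the paper automatically by working with Bessel distributions $J_{\pi,\boldsymbol{\psi}_X}(f\ast f^{\vee})\ge 0$ in the Whittaker model for all generic $\pi$, so no $K$-type bookkeeping is needed.
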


This theorem gives a power-saving counting of cusp forms on $\Omega_\cusp(X)$ that are at least ``$\sigma$-non-tempered'' at $p$. Following the Sarnak's philosophy \cite{sarnak1991bounds} a reasonable expectation is obtained by interpolating between the Weyl law (see \eqref{eq:weyl}) and the multiplicity one for the trivial representation. This way one obtains a density hypothesis of the form
\begin{equation}
    \# \{\pi\in \Omega_{\textrm{cusp}}(X)\colon  \sigma_p(\pi)\geq \sigma\} \ll_{p,\epsilon} X^{\frac{1}{2}(n+2)(n-1) \left(1-\frac{2\sigma}{n-1}\right)+\epsilon}. \nonumber
\end{equation}
Comparing this with the estimate established in Theorem~\ref{th:main_density} shows that we have proven a stronger statement. Note that this is only possible when restricting to families of cuspidal automorphic representations.

\begin{remark}
The restriction $n\ge 4$ is purely to keep the paper shorter and focused, as for $n=2,3$, a slightly different treatment of the \emph{orbital integrals} will be required. In any case, for $n=2$ and $n=3$ the ``spherical variants'' of this density theorem are known. For $n=2$, we refer to \cite{humphries2018density} for the corresponding theorem. For $n=3$, the relevant theorem is stated in \cite[Proposition~4.15]{jana2024optimal}.     
\end{remark}

\subsection{Application}\label{sec:optimal}

Here we briefly discuss the set-up of Ghosh--Gorodnik--Nevo, as in \cite{ghosh2015diophantine} and \cite{ghosh2018best}. We also refer to a recent work \cite{fraczyk2024optimal} and references therein for general discussion on the recent progress in this direction.

Recall the definition of \emph{Diophantine exponent} $\kappa$ from \cite[Definition 1.1]{jana2024optimal}. Loosely speaking, $\kappa$ is the infimum of all $\zeta$ such that for almost every $x,x_0\in \SL_n(\R)/\mathrm{SO}_n(\R)$ and for all sufficiently small $\varepsilon>0$ there exists a $\gamma\in \SL_n(\Z[1/p])$ with \emph{denominator} bounded by $\varepsilon^{-\frac{n+2}{2n}\zeta}$ satisfying $\dist(x,\gamma x_0)<\varepsilon$. By a covering argument one can show that $\kappa\ge 1$. In \cite{ghosh2018best}, it is implicitly stated that one should expect $\kappa\le 1$, and consequently, $\kappa=1$ \emph{optimal}. Informally, optimality of $\kappa$ is a manifestation of the Dirichlet-type result in the set-up of the (intrinsic) Diophantine approximation for the group $\SL_n$.

In \cite{ghosh2018best}, it is shown that $\kappa\le n-1$.
In the work of the second author and Kamber \cite[Theorem 2]{jana2024optimal}, they showed that $\kappa=1$ for $n=2,3$ and 
$$\kappa \le 1+ \frac{2\theta_n}{n-1-2\theta_n},$$
for $n\ge 4$, where $\theta_n$ is any bound towards temperedness for for the cuspidal spectrum of $\GL_n$. We remark that even if there is only one non-tempered cusp form the method of \cite{jana2024optimal} will be unable to prove optimality of $\kappa$.

On the other hand, it is also proved in \cite[Theorem~3]{jana2024optimal} that if certain version of Sarnak's density hypothesis is true, namely \cite[Conjecture~2]{jana2024optimal}, then $\kappa$ is optimal. Our main result Theorem \ref{th:main_density} directly implies \cite[Conjecture~2]{jana2024optimal}.

\begin{theorem}\label{th:optimal-exp}
For all $n\geq 4$ and prime $p$, the Diophantine exponent in the context of approximation of points on $\SL_n(\R)/\mathrm{SO}_n(\R)$ by $\SL_n(\Z[1/p])$ is optimal, \emph{i.e.}, $\kappa=1$.
\end{theorem}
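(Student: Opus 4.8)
The plan is to derive Theorem~\ref{th:optimal-exp} as a formal consequence of Theorem~\ref{th:main_density} via the conditional result \cite[Theorem~3]{jana2024optimal}. Concretely, the latter asserts that if the spectral-aspect density hypothesis \cite[Conjecture~2]{jana2024optimal} holds for $\SL_n(\Z[1/p])$ acting on $\SL_n(\R)/\SO_n(\R)$, then the Diophantine exponent $\kappa$ equals its conjecturally optimal value $1$. So the entire content of the proof is to check that our Theorem~\ref{th:main_density} implies \cite[Conjecture~2]{jana2024optimal}. First I would recall the precise statement of that conjecture: it predicts, for the family of cuspidal automorphic representations of $\GL_n$ ordered by the size of the archimedean spectral parameter, a bound of the shape $\#\{\pi : \sigma_p(\pi) \ge \sigma\} \ll X^{A(1 - c\sigma) + \epsilon}$ for an explicit exponent — and a quick comparison of the numerology shows that the exponent demanded there is exactly (or is implied by) the one appearing in Theorem~\ref{th:main_density}, namely $\tfrac12 (n+2)(n-1)(1 - \tfrac{4\sigma}{n-1})$, since this is already stronger than the ``interpolation'' density hypothesis $\tfrac12(n+2)(n-1)(1-\tfrac{2\sigma}{n-1})$ discussed in the introduction.

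The key steps, in order, are therefore: (i) unwind the definition of $\kappa$ and the role of $p$-adic non-temperedness, noting that the relevant non-archimedean places in \cite{jana2024optimal} are exactly the places dividing the ring $\Z[1/p]$, i.e.\ the single prime $p$, so Theorem~\ref{th:main_density} is applied at that one prime; (ii) translate the automorphic input of \cite[Conjecture~2]{jana2024optimal} into the counting statement for $\Omega_{\cusp}(X)$ — here one must be a little careful that the family in \cite{jana2024optimal} is indexed in the same way (balls of radius $X$ in the space of infinitesimal/Satake parameters at $\infty$) and that ``cuspidal'' versus ``the full discrete spectrum'' does not cause a discrepancy, which it does not because the residual spectrum of $\GL_n(\Z)$ is understood and contributes a lower-order term; (iii) invoke Theorem~\ref{th:main_density} to supply the required bound, observing the exponent saving is at least as strong as needed; (iv) feed this into \cite[Theorem~3]{jana2024optimal} to conclude $\kappa \le 1$, and combine with the elementary covering lower bound $\kappa \ge 1$ recalled in Section~\ref{sec:optimal} to get $\kappa = 1$.

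I do not expect any genuine obstacle here, since this theorem is designed to be a corollary; the only nontrivial point is a bookkeeping check that the normalizations of the spectral family and of the exponent $\zeta$ (with its factor $\tfrac{n+2}{2n}$ in the denominator bound) in \cite{jana2024optimal} match ours, so that \cite[Conjecture~2]{jana2024optimal} is literally the statement implied by Theorem~\ref{th:main_density} rather than merely morally so. I would write this out as a short paragraph verifying that the hypothesis of \cite[Theorem~3]{jana2024optimal} is met, and then state that the theorem follows. If one wanted to be fully self-contained one could additionally sketch why the conditional implication of \cite[Theorem~3]{jana2024optimal} holds — roughly, a density bound on non-tempered forms controls the $L^2$-error term in an effective equidistribution/mean ergodic statement for the $\SL_n(\Z[1/p])$-action, and optimality of $\kappa$ then follows from a Borel--Cantelli argument — but since that result is already in the literature I would simply cite it.
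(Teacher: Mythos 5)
Your proposal is correct and follows essentially the same route as the paper: the paper's proof likewise just checks that the family in \cite[(4.5)]{jana2024optimal} matches the spherical sub-family of $\Omega_{\cusp}(X)$ with $\sigma_p(\pi)$ corresponding to $\theta_{\varphi,p}$, so that Theorem~\ref{th:main_density} supplies \cite[Conjecture~2]{jana2024optimal} and makes \cite[Theorem~3]{jana2024optimal} unconditional, giving $\kappa=1$. Your extra bookkeeping remarks (normalizations, cuspidal versus discrete spectrum) are exactly the kind of verification the paper compresses into its short proof, so there is no substantive difference.
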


\begin{remark}
Theorem \ref{th:optimal-exp} can be thought as the archimedean counterpart of Sarnak's optimal lifting conjecture, which is recently resolved by \cite{assing2024principal} and \cite{jana2024local-l2} (also \cite{assing2024density}).
\end{remark}

\subsection{Set-up of the proof}\label{sec:set-up-proof}

Let us briefly describe the main ideas going in our proof of Theorem \ref{th:main_density}. To do so it will require us to use some (mostly standard) notation. We refer to Section~\ref{sec:not} for precise definitions.

We first note that instead of proving the counting result as formulated in Theorem~\ref{th:main_density} we will prove that for $l\in\Z_{\ge 0}$ we have
\begin{equation}\label{eq:Hecke_Version}
    \sum_{\pi \in \Omega_{\textrm{cusp}}(X)} \left\vert \lambda_{\pi}\left(p^l\right)\right\vert^2 \ll_{p,\epsilon} X^{\frac{1}{2}(n+2)(n-1) + \epsilon}, \quad\text{if } p^l\leq X^{n+2}.
\end{equation}
Here $\lambda_{\pi}(\cdot)$ are certain Hecke eigenvalues attached to $\pi$, in particular, they appear as the coefficients of the Dirichlet series for $L(\cdot,\pi)$. A standard argument, using ``Rankin's trick'', shows that this is sufficient; see for example \cite[Proposition~4.12]{jana2024optimal} or Section~\ref{sec:proof} for details.

As in previous works, we will use the Kuznetsov formula as our main global tool for the proof of \eqref{eq:Hecke_Version}. Given a test function $F$ on $\PGL_n(\R)$ it roughly states
\begin{equation}
    \sum_{\widehat{\X}_\gen} \vert \lambda_{\pi}(m)\vert^2 J_{\pi}(F) \approx \sum_{w\in W}\sum_{c\in \Z_{\neq 0}^{n-1}}S_{w}(m,m;c)\mathcal{O}_{w}(F,m,c),\nonumber
\end{equation}
where $J_\pi(F)$ is the Bessel distribution attached to $\pi$, $S_w(m,m;c)$ is a certain Kloosterman sum attached to the Weyl element $w$ and $\calO$ is a certain orbital integral (\emph{i.e.}, an archimedean version of the Kloosterman sum). See Proposition~\ref{prop:Kuznetsov} below for a precise statement. In order for this (abstract) formula to be useful we have to choose a suitable function $F$ (depending on $X$), such that $J_\pi(F)$ localizes on the set $\Omega_{\textrm{cusp}}(X)$. The problem then becomes to control the orbital integrals on the geometric side of the above formula.

In the level aspect (\emph{aka}, the non-archimedean setting) the families in question are naturally selected by some congruence subgroup. It is the structure of this subgroup, which yields an obvious choice for (the $p$-adic version of) the test function $F$ and ultimately governs the properties of the Kloosterman sums $S_w(m,m;c)$. Such an underlying structure is absent in the spectral setting and there seems to be no obvious way to construct a suitable test function $F$. The classical approach is to restrict to the spherical representation and \textit{guess} a function such that $J_{\pi}(F)$ has the desired properties; see \cite{blomer2013applications} for an approach with similar flavour in $\GL_3$. This function is usually explicitly given in Iwasawa coordinates. As a result one faces significant difficulties when analysing the orbital integrals $\mathcal{O}_w(F,m,c)$. Indeed, when expanding these integrals one encounters difficult expressions involving the Iwasawa projections restricted to the Bruhat cell given by $w$. Directly analysing these expressions for $\GL_n$ with $n$ large is a hurdle too difficult for us to overcome.

\subsection{Our method}\label{sec:method}

In this paper, we turn the tables and construct the test function geometrically following the strategy from \cite{jana2021application}. More precisely, we mimic the non-archimedean situation and choose $F_X$ to be related to a smoothened $L^1$-normalized characteristic function of the set
\begin{equation}
    K(X,\tau) = \{ g\in \GL_n(\R): \|g-\I_n\|_{\infty} \leq \tau/X\},\quad \tau>0\text{ sufficiently small}.\nonumber
\end{equation}
This is an approximate archimedean analogue of the principal congruence subgroup of level $X$. The upshot is that, given our range $p^l\ll X^{n+2}$, the geometric side only features contributions coming from the trivial Weyl element $w=1$ and from the element $w=w_{\ast}$ that also appear in the previous works \cite{blomer2019density,assing2024principal,assing2024density}. Estimating these remaining contributions trivially yields the desired estimate. This analysis is carried out in Section~\ref{sec:orbit} closely following the arguments from \cite[Section~3]{assing2024density}. Note that these arguments rely solely on the support of $F_X$.

The crux of the argument remains to establish a suitable lower bound for $J_{\pi}({F}_X)$ with $\pi\in \Omega_{\textrm{cusp}}(X)$. Note that all $\pi$ in $\Omega_\cusp(X)$ are \emph{generic}. For a cleaner technical discussion, let us define the Bessel distribution attached to $\pi$ with respect to a certain normalized character $\boldsymbol{\psi}_X$ (see \eqref{eq:def_char} below). Based on the $p$-adic theory developed in \cite{assing2024principal, assing2024density} we have the following expectations in the archimedean case. Let $\pi$ be an (infinite dimensional) irreducible representation of $\GL_n(\R)$ with trivial central character. Then the image of $\pi(F_X)$ consists of approximately $K(X,\tau)$-invariant elements.  We further should have:
\begin{enumerate}
    \item\label{item:invariance} A representation $\pi$ features approximately $K(X,\tau)$-invariant vectors (\emph{i.e.}, $\tr(\pi(F_X))$ is not negligible) precisely when $\Vert \mu_{\infty}(\pi)\Vert \ll X$. This is analogous to the bounds on the depth of the representations in the $p$-adic situation.
    \item\label{item:dimension} If $\Vert \mu_{\infty}(\pi)\Vert\ll X$, then the set of approximately $K(X,\tau)$-invariant vectors contains (at most) $\approx X^{\frac{1}{2}n(n-1)}$ mutually orthogonal elements. In other words, we have $\tr(\pi(F_X)) \approx X^{\frac{1}{2}n(n-1)}$ in this case.
\end{enumerate}
These statements can be made precise, we refer to \cite{jana2020analytic} for similar statements. As we will be working with the Kuznetsov (relative) trace formula, as opposed to the Selberg trace formula, we will need to show a ``Whittaker-relative'' version of Item \ref{item:invariance} and Item \ref{item:dimension}. Namely, in Proposition~\ref{prop:spectral_lower_bound} below, we essentially show that the bound 
\begin{equation}\label{eq:lower_bound_Bessel}
    J_{\pi,\boldsymbol{\psi}_X}({F}_X) \gg X^{\frac{1}{2}n(n-1)} \text{ as long as }\Vert \mu_{\infty}(\pi)\Vert\, \leq X 
\end{equation} holds, so that the spectral side of the Kuznetsov formula truthfully reflects the spectral properties of the operator attached to $F_X$ on the cuspidal spectrum. 

Establishing \eqref{eq:lower_bound_Bessel} is the most difficult step and the main novelty in our proof. The corresponding non-archimedean estimate is established in \cite[Section~6]{assing2024principal} and \cite[Section~2]{assing2024density}. While overall the arguments have many parallels, the main hurdles lie at different steps. For example, in the non-archimedean case it is hard to show that automorphic representations appearing in corresponding family feature so called \textit{regular types} at the relevant places. In the archimedean case, the analogous statement (see \eqref{eq:tau} below) is given in \cite[Lemma~7.1 and Lemma~7.8]{nelson2023standard}. The challenging task for us is to construct suitable vectors microlocalized at certain \emph{regular elements} and use their properties to verify \eqref{eq:lower_bound_Bessel}. Here we rely on the corresponding microlocal calculus as developed in \cite{nelson2023standard} building on \cite{nelson2021orbit, nelson2023spectral}.

\subsection{Orbit method heuristics}

We end this introduction by giving heuristic explanation to obtain Item \ref{item:invariance} and Item \ref{item:dimension} using the quantitative orbit method of Nelson--Venkatesh from \cite{nelson2021orbit}. This is, in no way, intended to be a rigorous argument and we apologize in advance for all the approximate identities $\approx$ that will be used. We will also assume that $\pi$ is tempered (at infinity). This allows us to associate an co-adjoint orbit $\mathcal{O}_{\pi}\subseteq \mathfrak{g}^{\wedge}$. 

After assuming that $\tau$ is sufficiently small we can pretend that
\begin{equation}
    K(X,\tau) \approx \exp(B_{0}(\tau/X)), \nonumber
\end{equation}
where $B_{0}(\tau/X)\subseteq \mathfrak{g}$ is a ball of radius $\tau/X$ around $0$. This allows to fix a smoothened characteristic function $\chi$ on $B_0(\tau/X)$. Further, we fix a symbol $a\colon \mathfrak{g}^{\wedge}\to \C$, which is constant on some fixed compact set $\Omega_0\subseteq \mathfrak{g}^{\wedge}$. For this heuristic, we will work with
\begin{equation}
    F_X(g) = X^{\dim(\mathfrak{g})}   \chi(\log(g)) a^{\vee}(X \log(g)). \nonumber
\end{equation}
The presence of $\chi$ ensures that $F_X$ is supported in $K(X,\tau)$, but we have modified it by adding the Fourier transform $a^{\vee}$ of some symbol $a$. This is very close to our choice made in Section~\ref{sec:spectral} below. (In practice, $F_X$ is the self-convolution of such a function, but we ignore this technicality at this point.)

Recall the Jacobian of the $\exp$ map from $\g\supset B_0(\tau)\to G$ is defined by $\d g=j(y)\d y$ for some $j\colon B_0(\tau) \to \R_{>0}$ and satisfies $j(0)=1$. We arrive at (see \S\ref{sec:lie-theory})
\begin{equation}
    \pi(F_X)  = \int_{\mathfrak{g}} \chi(Y) a_{1/X}^{\vee}(y)\pi(\exp(Y))j(yY)\d Y,\nonumber
\end{equation}
where $a_{1/X}(\xi) := a(\xi/X)$. In view of the operator assignment defined in \cite[(2.2)]{nelson2021orbit}, we obtain
\begin{equation}
    \pi(F_X) \approx \textrm{Op}_{1/X}(a;\chi).\nonumber
\end{equation}
Here we have used the approximation $j(y)\approx 1$ as $y\approx 0$. We compute the trace using the Kirillov character formula as in \cite[Theorem~9]{nelson2021orbit} and obtain
\begin{equation}
    \tr(\pi(F_X)) \approx X^{d(\mathcal{O}_{\pi})} \int_{\frac{1}{X}\mathcal{O}_{\pi}} a(\xi) d\omega_{\frac{1}{X}\mathcal{O}_{\pi}}(\xi)\d\xi.\nonumber
\end{equation}
Now, if the re-scaled orbit $\frac{1}{X}\mathcal{O}_{\pi}$ passes through the fixed compact set $\Omega_0$, which is determined by $a$, then the $\xi$-integral can be bounded from below by a constant. Otherwise, we use the decay of $a$ to show that the integral is negligible. Since the location of the orbit $\frac{1}{X}\mathcal{O}_{\pi}$ translates into bounds on $\mu_{\infty}(\pi)$ we obtain
\begin{equation}
    \tr(\pi(F_X)) \approx X^{d(\mathcal{O}_{\pi})} \text{ for }\Vert \mu_{\infty}(\pi)\Vert \ll X \label{eq:lower_trace_heur}
\end{equation}
and that $\tr(\pi(F_X))$ is negligible for $\Vert \mu_{\infty}(\pi)\Vert \gg X^{1+\epsilon}$. Here $d(\calO_\pi) = \frac{n(n-1)}{2}$ is half the (real) dimension of the co-adjoint orbit for generic $\pi$. This reflects the Item \ref{item:invariance} and Item \ref{item:dimension} described above.

\section{Notation and Preliminaries}\label{sec:not}

We will now introduce all the frequently used notations and recall some important preliminaries from the theory of automorphic forms and representations. Most of the notation is taken from \cite{jana2021application}.

We fix $n\in \N$ with $n\geq 4$ and suppress it from the asymptotic notation. In addition, all implied constants are allowed to depend on $\epsilon$, which always represents a sufficiently small positive number that can vary from line to line.

\subsection{Groups, measures and matrices}

Let $G:=\GL_n(\R)$.  We write $B$ for the standard Borel subgroup of $G$ consisting of upper triangular matrices and let $K_{\infty} :=\O_n(\R)$ be the maximal compact subgroup of $G$. We have the decomposition $B=AN$, where $A$ is the maximal split torus consisting of diagonal matrices and $N$ is the unipotent radical of $B$. We denote the  $r\times r$-identity matrix by $\I_r$. The centre of $G$ is denoted by $Z$ and the Weyl group by $W$. The later contains the special element
\begin{equation}
    w_{\ast} := \left(\begin{matrix} & & -1\\ & \I_{n-2} & \\ 1&& \end{matrix}\right). \nonumber
\end{equation}
Given $w\in W$ we define
\begin{equation*}
    N_w := w^ {-1}N^\top w\cap N. 
\end{equation*}
For example, we have
\begin{equation}
    N_{w_{\ast}} = \left\{ \left(\begin{matrix} 1 & \ast & \ast \\  & \I_{n-2} & \ast \\ &&1\end{matrix}\right)\in N \right\}.\nonumber 
\end{equation}

Given a parameter $X>1$ we write
\begin{equation}
    a_X := \diag(X^{1-n},X^{2-n},\ldots, X^{-1},1)\in A.\nonumber
\end{equation}
Note that $Z\subseteq A$. Given $\mathbf{m}=(m_1,\ldots, m_{n-1})\in \N^{n-1}$ we associate the element
\begin{equation}\label{eq:form-of-tilde-m}
    \widetilde{\mathbf{m}} := \diag(m_1\cdots m_{n-1},m_1\cdots m_{n-2},\ldots,m_1,1) \in A.
\end{equation}
Similarly, we define
\begin{equation}\label{eq:form-of-c*}
    c^{\ast} := \diag(1/c_{n-1},c_{n-1}/c_{n-2},\ldots, c_2/c_1,c_1)
\end{equation}
for $c\in \Z_{\neq 0}^{n-1}$. We put $c_0=c_n=1$ for notational convenience. Note that $\det(c^{\ast})=1$, so that it lies in $A\cap \SL_n(\R)$.

We equip $N$ with the Lebesgue measure coming from $\R^{\frac{1}{2}n(n-1)}$ and $A\cong (\R^{\times})^{n}$ with the usual Haar measure. The Haar measure on $K_{\infty}$ is normalized to be a probability measure. Having made these choices we can write the Haar measure on $G$ in Iwasawa coordinates as
\begin{equation}
    \int_{G} f(g)\d g = \int_{N}\int_{A_+}\int_{K_{\infty}} f(nak)\delta(a)^{-1} \d k\d a\d n,\label{eq:measure}
\end{equation}
where $\delta$ is the modular character on $B$ and $A_+:=A\cap(\R_+)^n$. Further, for $a\in A$ we define $\delta_w(a)$ to be the Jacobian of the map $N_w\ni x\mapsto axa^{-1}\in N_w$. An important example is
\begin{equation}\label{eq:value-delta}
    \delta_{w_{\ast}}(\widetilde{\mathbf{m}}) = m^{n-1} \text{ for }\mathbf{m}=(m,1,\ldots,1).
\end{equation}

Given parameters $X>1$ and $0<\tau<1$ we define
\begin{equation}
    K(X,\tau) := \{ g\in G\,\colon\, \| g-\I_n\|_{\infty}<\tau/X\} \text{ and }K(X,\tau)^{\sharp}  := a_X^{-1}K(X,\tau)a_X.\nonumber
\end{equation}
These are not subgroups, but they satisfy certain approximate invariance properties analogous to \cite[Lemma~1.1]{jana2020analytic}.

Given a function $F_X$ with support in $K(X,\tau)$, we also define  
\begin{equation}
    F_X^{\sharp}(g) := F_X(a_Xga_X^{-1}). \label{eq:def_FX}
\end{equation}
One checks that the support of $F_X^{\sharp}(g)$ is in $K(X,\tau)^{\sharp}$.

\subsection{Lie algebras and parameters}\label{sec:lie-theory}

Let $\mathfrak{g}$ denote the Lie algebra $\mathrm{Lie}(G)$ of $G$. Put $\mathfrak{g}_{\C}:=\mathfrak{g}\otimes_{\R} \C$, the complexification of $\g$. Similar notation is used for the Lie algebras $\mathfrak{n}=\textrm{Lie}(N)$, $\mathfrak{a}=\textrm{Lie}(A)$ and $\overline{\mathfrak{n}}=\textrm{Lie}(N^\top)$.

On a suitable neighbourhood of the identity we can also describe the Haar measure from \eqref{eq:measure} on the level of the Lie algebra $\mathfrak{g}$. More precisely, let $B_0(\tau_0)\subseteq \mathfrak{g}$ be a sufficiently small ball around $0$ in $\mathfrak{g}$. Then there is a smooth function $j\colon B_0(\tau_0)\to \R_{>0}$ with $j(0)=1$ such that $\d g=j(Y)\d Y$, where $\d Y$ is the Lebesgue measure on $\mathfrak{g}\cong \R^{n^2}$.

We write $\mathfrak{g}^{\wedge}$ for  the Pontryagin dual of $\mathfrak{g}$ and recall the identification
\begin{equation}
    \mathfrak{g}^{\wedge} = i\mathfrak{g}^{\ast} = \Hom(\mathfrak{g},i\R) \cong i\mathfrak{g}.\nonumber
\end{equation}
The identification is explicitly given by
\begin{equation}
    i\mathfrak{g} \ni \xi \mapsto [x\mapsto \tr(x\xi)]\in \mathfrak{g}^{\wedge}.\nonumber
\end{equation}
The complex dual of $\g_\C$ is denoted by $\mathfrak{g}_{\C}^{\ast}$.

We now set up some notation following \cite[Section~7.1 and~7.2]{nelson2023standard}. We write $[\mathfrak{g}_{\C}^{\ast}]$ for the GIT quotient $\mathfrak{g}_{\C}^{\ast}\parallelslant G$ (according to the co-adjoint action of $G$) and $[\mathfrak{g}^{\wedge}]$ for its real form. We have maps
\begin{equation}
    \mathfrak{g}_{\C}^{\ast} \ni \xi \mapsto[\xi]\in [\mathfrak{g}_{\C}^{\ast}]. \nonumber
\end{equation}
We use the same notation for the corresponding map $\mathfrak{g}^{\wedge}\to [\mathfrak{g}^{\wedge}]$. 
Recall that there is a natural way to scale elements in $ [\mathfrak{g}_{\C}^{\ast}]$ (resp.\ $[\mathfrak{g}^{\wedge}]$) by scalars in $\C$ (resp.\ $\R$). 
Following \cite[Section~7.2.5]{nelson2023standard} we can associate a multiset of complex numbers 
\begin{equation}
    \textrm{eval}(\lambda)=\{\lambda_1,\ldots,\lambda_n\}. \nonumber
\end{equation} 
to $\lambda\in [\mathfrak{g}_{\C}^{\ast}]$.

\subsection{Characters and representations}\label{sec:char-rep}

We define the character $\boldsymbol{\psi}_X\colon N\to S^1$ by
\begin{equation}
    \boldsymbol{\psi}_X(x) = e\left(X\cdot \sum_{i=1}^{n-1}x_{i,i+1}\right) \text{ for }x\in N;\quad e(z):=\exp(2\pi i z). \label{eq:def_char}
\end{equation}
To ease notation we write $\boldsymbol{\psi} = \boldsymbol{\psi}_1$ for the standard choice. Note that
\begin{equation}
    \boldsymbol{\psi}_X(x) = \boldsymbol{\psi}(a_X^{-1}xa_X) \text{ for }x\in N. \nonumber
\end{equation}

We write $\widehat{G}$ for the unitary dual of $G$. Given any irreducible admissible representation $\pi$ of $G$ we associate and infinitesimal parameter $\lambda_{\pi}\in [\mathfrak{g}_{\C}^{\ast}]$. Note that if $\pi\in \widehat{G}$ then $\lambda_{\pi}\in [\mathfrak{g}^{\wedge}]$; see \cite[Section~7.2.5]{nelson2023standard}. We suggestively write $$\mu_{\infty}(\pi) := \textrm{eval}(\lambda_{\pi}) = \{\lambda_{\pi,1},\ldots,\lambda_{\pi,n}\}.$$ This (multi)-set of complex numbers is called the \emph{spectral parameter} (\emph{aka}, the archimedean Satake parameters) of $\pi$. From \cite[Lemma~7.1]{nelson2023standard} we recall that, for $X\geq 1$, the following two properties are equivalent:
\begin{itemize}
    \item $X^{-1}\lambda_{\pi}$ lies in some fixed compact subset of $[\mathfrak{g}_{\C}^{\ast}]$;
    \item $\lambda_{\pi,j}\ll X$ for $1\leq j\leq n$. 
\end{itemize}

We recall the element $\tau(\pi,\boldsymbol{\psi},X)\in \mathfrak{g}^{\wedge}$ from \cite[Definition~7.7]{nelson2023standard}. In particular, this element satisfies
\begin{equation}\label{eq:tau}
    [\tau(\pi,\boldsymbol{\psi},X)] = X^{-1}\lambda_{\pi}. \nonumber 
\end{equation}
From \cite[Lemma~7.8]{nelson2023standard} we recall that, if $X^{-1}\lambda_{\pi}$ belongs to some fixed compact subset of $[\mathfrak{g}^{\wedge}]$, then $\tau(\pi,\boldsymbol{\psi},X)$ belongs to a fixed compact subset of $\mathfrak{g}^{\wedge}_{\textrm{reg}}.$ Thus, our discussion above allows us to find a fixed compact set
\begin{equation}
    \Omega_0\subseteq \mathfrak{g}^{\wedge}_{\textrm{reg}} \label{eq:Omega_0}
\end{equation}
such that $\Vert \mu_{\infty}(\pi)\Vert \leq X$ if and only if $\tau(\pi,\boldsymbol{\psi},X)\in \Omega_0$.

We will now adopt some notation from \cite[Section~8.1]{nelson2023standard} and introduce the Whittaker models for the generic representations $\pi\in \widehat{G}$. We write $\pi^{\infty}$ for the smooth vectors in $\pi$ and $\pi^{-\infty}$ for the continuos dual of $\pi^{\infty}$. We say that $\pi$ is \emph{generic} if there is a non-trivial vector $\Theta_{\pi,\boldsymbol{\psi}_X}\in \pi^{-\infty}$ such that
\begin{equation}\label{eq:def-dist-vec}
    \pi(x)\Theta_{\pi,\boldsymbol{\psi}_X} = \boldsymbol{\psi}_X(x)\Theta_{\pi,\boldsymbol{\psi}_X} \text{ for all }x\in N.
\end{equation}
Note that if $\Theta_{\pi,\boldsymbol{\psi}_X}$ exists for some $X$ then it exists for all $X$. Furthermore, if $\pi$ is generic, then $\Theta_{\pi,\boldsymbol{\psi}_X}$ is unique up to scalars. We define the $\boldsymbol{\psi}_X$-Whittaker model of $\pi$ by
\begin{equation}
    \mathcal{W}(\pi,\boldsymbol{\psi}_X) = \{ g\mapsto \langle \pi(g)v,\Theta_{\pi,\boldsymbol{\psi}_X}\rangle\mid v\in\pi^\infty\}.\nonumber 
\end{equation}
The inner product in the Whittaker model is given by
\begin{equation}
    \langle W_1,W_2\rangle_{\mathcal{W}(\pi,\boldsymbol{\psi}_X)} := \int_{(N(\R)\cap \GL_{n-1}(\R))\backslash \GL_{n-1}(\R)} W_1\left[\left(\begin{matrix} h & 0 \\ 0 & 1\end{matrix}\right)\right] \overline{W_2\left[\left(\begin{matrix} h & 0 \\ 0 & 1\end{matrix}\right)\right]}\d h.\nonumber
\end{equation}
    
\begin{remark}\label{rm:changing_char}
There is a natural map
\begin{equation}
    \mathcal{W}(\pi,\boldsymbol{\psi}) \to \mathcal{W}(\pi,\boldsymbol{\psi}_X) ,\, W\mapsto [g\mapsto X^{-\frac{1}{12}n(n-1)(n-2)}W(a_X^{-1}g)]. \nonumber
\end{equation}
Note that this is normalized so that the map is isometric with respect to the inner product in the Whittaker model.    
\end{remark}

\subsection{The Bessel distribution}

Given an irreducible generic unitary representation $\pi$ of $G$ we define the Bessel distribution as follows. Let $\mathcal{B}_{\boldsymbol{\psi}_X}(\pi)$ denote a orthonormal basis for $\mathcal{W}(\pi,\boldsymbol{\psi}_X)$ consisting of smooth vectors. Then, for $F\in \mathcal{C}_c^{\infty}(G)$ we set
\begin{equation}
    J_{\pi,\boldsymbol{\psi}_X}(F) := \sum_{W\in \mathcal{B}_{\boldsymbol{\psi}_X}(\pi)} \pi(F)W(1)\overline{W(1)}.\nonumber
\end{equation}
Recall \cite[Lemma~2.2]{jana2021application}, which states that
\begin{equation}
    J_{\pi,\boldsymbol{\psi}_X}(f\ast f^{\vee}) = \sum_{W\in \mathcal{B}_{\boldsymbol{\psi}_X}(\pi)} \vert \pi(f)W(1)\vert^2.\nonumber
\end{equation}
Here $\ast$ denotes the convolution and $f^\vee(g):=\overline{f\left(g^{-1}\right)}$
This is important in order to guarantee positivity of the Bessel distributions in practice.

\begin{remark}
Given a function $F:=f\ast f^{\vee}$ we can define $\widetilde{F}(g) := F\left(a_X^{-1}ga_X\right)= \widetilde{f}\ast \widetilde{f}^{\vee}$ with $\widetilde{f} := L_{a_X}f:=f\left(a_X^{-1}\cdot\right)$. In view of Remark~\ref{rm:changing_char} we can compute
\begin{align}
    J_{\pi,\boldsymbol{\psi}}(F) &= \sum_{W\in \mathcal{B}_{\boldsymbol{\psi}}(\pi)}  \left\vert \pi\left(L_{a_X^{-1}}\widetilde{f}\right)W(1)\right\vert^2 = \sum_{W\in \mathcal{B}_{\boldsymbol{\psi}}(\pi)}\left\vert \pi\left(\widetilde{f}\right)W(a_X^{-1})\right\vert^2 \nonumber\\
    &= X^{\frac{1}{6}n(n-1)(n-2)}\sum_{W\in \mathcal{B}_{\boldsymbol{\psi}_X}(\pi)}\left\vert \pi\left(\widetilde{f}\right)W(1)\right\vert^2 = X^{\frac{1}{6}n(n-1)(n-2)}J_{\pi,\boldsymbol{\psi}_X}(\widetilde{F}).\nonumber  
\end{align}
In particular, applying this to $F=F_X^{\sharp}$ and $\widetilde{F}=F_X$ (see \eqref{eq:def_FX}) we observe that
\begin{equation}
    J_{\pi,\boldsymbol{\psi}}(F_X^{\sharp}) =  X^{\frac{1}{6}n(n-1)(n-2)} J_{\pi,\boldsymbol{\psi}_X}(F_X) = X^{\frac{1}{6}n(n-1)(n-2)} \sum_{W\in \mathcal{B}_{\boldsymbol{\psi}_X}(\pi)}\vert \pi(f_X)W(1)\vert^2.\label{eq:conversion}
\end{equation}
\end{remark}

\subsection{Automorphic forms and representations}

We let $\Gamma:=\GL_n(\Z)$ and write $\mathbb{X} := Z\Gamma\backslash G$ which we equip with the $G$-invariant probability measure. We are actually interested in those representations $\pi\in \widehat{G}$ that appear in the spectral decomposition of $L^2(\mathbb{X})$. We write $\widehat{\mathbb{X}}_{\textrm{disc}}$ for the class of all discrete automorphic representations and $\widehat{\mathbb{X}}_{\textrm{cusp}}$ for the subclass consisting of cuspidal representations.

Given $\pi\in \widehat{\mathbb{X}}_{\textrm{cusp}}$ we define the Whittaker period by
\begin{equation}
    \mathcal{W}_{v}(g) := \int_{N\cap\Gamma\backslash N} \pi(xg)v\, \overline{\boldsymbol{\psi}(x)}\d x, \quad v\in \pi.\nonumber
\end{equation}
By \cite{lapid2015whittaker} there is a number $\ell(\pi)\in \R_+$ such that
\begin{equation}
    \Vert v\Vert_{L^2(\mathbb{X})}^2 = \ell(\pi)\cdot \Vert \mathcal{W}_{v}\Vert_{\mathcal{W}(\pi,\boldsymbol{\psi})}^2.\nonumber
\end{equation}
Moreover, we have $\ell(\pi)\asymp L(1,\pi,\textrm{Ad})$. For $\mathbf{m}\in \N^{n-1}$ we follow \cite[(2.5)]{jana2021application} and define the $\boldsymbol{m}$'th Fourier coefficient by
\begin{equation}
    \int_{N\cap\Gamma\backslash N} \pi(xg)v\, \overline{\boldsymbol{\psi}(\widetilde{\mathbf{m}}x\widetilde{\mathbf{m}}^{-1})}\d x = \frac{\lambda_{\pi}(\mathbf{m})}{\delta^{\frac{1}{2}}(\widetilde{\mathbf{m}})}\mathcal{W}_v(\widetilde{\mathbf{m}}g).\nonumber
\end{equation}
We obviously have $\lambda_{\pi}((1,\ldots,1)) = 1$. Furthermore, the numbers $\lambda_{\pi}$ are certain Hecke eigenvalues of $\pi$ and assuming the generalized Ramanujan conjecture we have $\lambda_{\pi}(\mathbf{m}) \ll \vert m_1\cdots m_{n-1}\vert^{\epsilon}$.

Now let us define the family
\begin{equation}
    \Omega_{\textrm{cusp}}(X) := \{\pi\in \widehat{\mathbb{X}}_{\textrm{cusp}}\colon  \Vert \mu_{\infty}(\pi)\Vert\leq X\}.\label{eq:def_family}
\end{equation}
Given a prime $p$ we can attach a set of $p$-adic Satake parameter $\mu_p(\pi)$. This is a (multi)-set 
\begin{equation}
    \mu_p(\pi) = \{\lambda_p(\pi)_1,\ldots,\lambda_p(\pi)_{n}\} \nonumber
\end{equation}
of complex numbers. These are uniquely determined by the numbers $\lambda_{\pi}((m_1,\ldots,m_{n-1)}))$, where $m_1,\ldots,m_{n-1}$ are powers of $p$. We define
\begin{equation*}
    \sigma_p(\pi) = \max_{i=1,\ldots,n-1} \vert \Re(\lambda_p(\pi)_i)\vert. \label{def:sigma_p}
\end{equation*}
This is a measure for the non-temperedness of $\pi$ at $p$. In particular, under the generalized Ramanujan conjecture we expect that $\sigma_p(\pi)=0$ for all $\pi\in \widehat{\mathbb{X}}_{\textrm{cusp}}$. We record the following facts:
\begin{itemize}
    \item If $\pi$ is the trivial representation, then $\sigma_p(\pi)=\frac{n-1}{2}$ for all $p$.
    \item If $\pi\in \widehat{\mathbb{X}}_{\textrm{cusp}},$ then there is $\delta_n>0$ such that $\sigma_p(\pi)\leq \frac{1}{2}-\delta_n$; see \cite{blomer2011ramanujan}.
\end{itemize}

The notions introduced above can be extended to general (\emph{i.e.}, not necessarily discrete) automorphic representations. These make up the class $\widehat{\mathbb{X}}$ which is equipped with an automorphic Plancherel measure $\mu_{\mathrm{aut}}$. In particular, given $F\in \mathcal{C}_c^{\infty}(\X)$ we can write the spectral decomposition of the \emph{right-translation} operator $R(F)\colon L^2(\mathbb{X})\to L^2(\mathbb{X})$ as
\begin{equation}
    R(F) = \int_{\widehat{\mathbb{X}}} \pi(F)\d\mu_{\mathrm{aut}}(\pi). \nonumber
\end{equation}
We denote $\widehat{\mathbb{X}}_{\textrm{gen}}$ to be the subspace of generic representations. Note that $\widehat{\mathbb{X}}_\gen\cap\widehat{\mathbb{X}}_{\textrm{disc}}=\widehat{\mathbb{X}}_{\textrm{cusp}}$.
Finally, the measure $\mu_{\textrm{aut}}$ restricted to $\widehat{\mathbb{X}}_{\textrm{disc}}$ is discrete.

\subsection{The (pre)-Kuznetsov formula}

In this section, we recall the (pre)-Kuznetsov formula for $\GL_n$, which is a special relative trace formula with many important applications in analytic number theory. We first need to introduce some further notation.

Given $\mathbf{m},\mathbf{n}\in \N^{n-1}$ and $w\in W$ we call a modulus $c\in \Z_{\neq0}^{n-1}$ \emph{admissible} if
\begin{equation*}
    \boldsymbol{\psi}\left(\widetilde{\mathbf{m}}c^{\ast}wxw^{-1}(c^{\ast})^{-1}\widetilde{\mathbf{m}}^{-1}\right) = \boldsymbol{\psi}(\widetilde{\mathbf{n}}x\widetilde{\mathbf{n}}^{-1}) \text{ for all }x\in w^{-1}Nw\cap N.\label{eq:def_ad}
\end{equation*}
The admissibility of $c$ of course depends on $\mathbf{m},\mathbf{n}$ and $w$, but this dependence will not be emphasised. One can check that the admissibility condition can only be satisfied if $w\in W$ is of the form
\begin{equation}\label{eq:admissible-weyl}
    \left(\begin{matrix} & & \I_{d_1} \\ & \iddots & \\ \I_{d_k}\end{matrix}\right)\text{ with }d_1+\ldots+d_k=n;
\end{equation}
see \cite[Lemma~2.1]{jana2021application} for example.
For example, if $w$ is the long Weyl element (\emph{i.e.}, $d_1=\cdots = d_n=1$), then $w^{-1}Nw\cap N = \{\I_n\}$, then all $c$ are admissible.

For non admissible $c$ we set
\begin{equation}
    S_w(\mathbf{m},\mathbf{n};c)=0. \nonumber
\end{equation}
Otherwise these quantities are Kloosterman sums defined by
\begin{equation*}
    S_w(\mathbf{m},\mathbf{n};c) = \sum_{\substack{\gamma\in N\cap\Gamma \backslash (\Gamma\cap BwN_w/\Gamma\cap N_w\\ \gamma=xc^{\ast}wy}} \boldsymbol{\psi}(\widetilde{\mathbf{m}}x\widetilde{\mathbf{m}}^{-1})\boldsymbol{\psi}(\widetilde{\mathbf{n}}y\widetilde{\mathbf{n}}^{-1}).
\end{equation*}
We easily compute that
\begin{equation}
    S_1(\mathbf{m},\mathbf{n};c)=\delta_{\mathbf{m}=\mathbf{n}}\delta_{c=(1,\ldots,1)}.\nonumber
\end{equation}
In general the Kloosterman sums are highly non-trivial objects. We have the trivial bound
\begin{equation}\label{eq:triv-bound-klooster}
    S_{w}(\mathbf{m},\mathbf{m};c) \ll_{\epsilon} \vert c_1\cdots c_{n-1}\vert^{1+\epsilon},
\end{equation} 
which is sufficient for our purposes. However, due to recent results in \cite{linn2024kloosterman} non-trivial bounds for all Kloosterman sums are available.

Finally, we need to define the archimedean version of the Kloosterman sums. These will be certain (twisted relative) orbital integrals. Given a test function $F\in \mathcal{C}_c^{\infty}(Z\backslash G)$ we associate $W_F\in \mathcal{C}_c^{\infty}(ZN\backslash G,\boldsymbol{\psi})$ given by
\begin{equation}\label{eq:def-orb-int-1}
    W_F(g) :=\int_{N} F(xg)\overline{\boldsymbol{\psi}(x)}\d x.
\end{equation}
With this at hand we define the (twisted relative) orbital integrals
\begin{equation}\label{eq:def-orb-int-gen}
    \mathcal{O}_{\boldsymbol{\psi}}(F;g) = \int_{N_w}W_F(gy)\overline{\boldsymbol{\psi}(y)}\d y,
\end{equation}
for $g\in BwN_{w}$. This integral is absolutely convergent; see for example \cite[p.\ 496]{getz2024intro}. Note that the definition of $\mathcal{O}_{\boldsymbol{\psi}}(F;g)$ depends on the Bruhat cell that contains $g$.

We record the following version of the Kuznetsov formula taken from \cite[Proposition~2.1]{jana2021application}. See also \cite[Lemma~5.1]{assing2024principal} for a related result in classical formulation.

\begin{prop}\label{prop:Kuznetsov}
Let $F\in \mathcal{C}_c^{\infty}(Z\backslash G)$ and $\mathbf{m}\in \N^{n-1}$. Then we have
\begin{equation}
    \int_{\widehat{\mathbb{X}}_{\mathrm{gen}}} \vert \lambda_{\pi}(\mathbf{m})\vert^2J_{\pi,\boldsymbol{\psi}}(\overline{F})\frac{\d\mu_{\mathrm{aut}}(\pi)}{\ell(\pi)} = \mathop{\sum\sum}_{\genfrac{}{}{0pt}{}{w\in W\,c\in \Z_{\neq 0}^{n-1}}{\mathrm{admisible}}}\frac{S_w(\mathbf{m},\mathbf{m};c)}{\delta_w(\widetilde{\mathbf{m}})} \mathcal{O}_{\boldsymbol{\psi}}(F,\widetilde{\mathbf{m}}c^{\ast}w\widetilde{\mathbf{m}}^{-1}).\nonumber
\end{equation}
\end{prop}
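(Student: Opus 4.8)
\textbf{Proof plan for Proposition~\ref{prop:Kuznetsov}.}
The plan is to derive this relative trace formula by computing the automorphic kernel attached to $F$ in two ways: spectrally and geometrically. First I would form the kernel $K_F(g_1,g_2) = \sum_{\gamma\in Z\Gamma\backslash Z G}$-type object, more precisely $K_F(g_1,g_2) = \sum_{\gamma\in Z\backslash \Gamma} F(g_1^{-1}\gamma g_2)$ on $\X\times\X$, and then integrate it against the character $\boldsymbol{\psi}$ twisted by $\widetilde{\mathbf{m}}$ in the first variable and against $\boldsymbol{\psi}$ twisted by $\widetilde{\mathbf{m}}$ again in the second variable (since we are in the $\mathbf{m}=\mathbf{n}$ diagonal case), i.e.\ compute
\begin{equation}
    I := \int_{N\cap\Gamma\backslash N}\int_{N\cap\Gamma\backslash N} K_F(x_1\widetilde{\mathbf{m}}, x_2\widetilde{\mathbf{m}}) \overline{\boldsymbol{\psi}(\widetilde{\mathbf{m}}x_1\widetilde{\mathbf{m}}^{-1})}\,\boldsymbol{\psi}(\widetilde{\mathbf{m}}x_2\widetilde{\mathbf{m}}^{-1})\d x_1\d x_2.\nonumber
\end{equation}
This is absolutely convergent because $F$ is compactly supported modulo $Z$ and cuspidality issues do not arise at this stage (one can insert a truncation and take limits, or appeal to the fact that the Poincaré series converges for $\mathcal{C}_c^\infty$ data).

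For the spectral side I would expand $K_F$ using the spectral decomposition of $L^2(\X)$, so that $K_F(g_1,g_2) = \int_{\widehat{\X}} (\pi(F)$ acting on matrix coefficients$)\,\d\mu_{\mathrm{aut}}(\pi)$, and then the double unipotent integral against $\boldsymbol{\psi}$-characters picks out, for each $\pi$, the product of a Whittaker period in $g_1$ and a conjugate Whittaker period in $g_2$. Using the definition of the $\mathbf{m}$'th Fourier coefficient this produces the factor $|\lambda_\pi(\mathbf{m})|^2$ together with $\delta^{-1}(\widetilde{\mathbf{m}})$, while the Whittaker periods combine via the Lapid--Mao type identity $\|v\|^2_{L^2(\X)} = \ell(\pi)\|\mathcal{W}_v\|^2_{\mathcal{W}(\pi,\boldsymbol{\psi})}$ to convert the $L^2$-normalized spectral expansion into one normalized in the Whittaker model; unwinding the resulting expression in an orthonormal Whittaker basis and recognizing $\sum_W \pi(F)W(1)\overline{W(1)}$ as $J_{\pi,\boldsymbol{\psi}}(\overline{F})$ (the complex conjugate on $F$ coming from which variable carries the bar) gives the left-hand side, with the Jacobian factors from $\delta(\widetilde{\mathbf{m}})$ cancelling against those implicit in the Fourier-coefficient normalization. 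The continuous-spectrum contributions are handled uniformly by the same bookkeeping since the statement is phrased as an integral over $\widehat{\X}_{\mathrm{gen}}$ against $\mu_{\mathrm{aut}}$.

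For the geometric side I would substitute the definition of $K_F$ into $I$, collapse the two $N\cap\Gamma\backslash N$ integrals with the $\gamma$-sum, and organize $\Gamma$ by the Bruhat decomposition $\Gamma = \bigsqcup_{w\in W}(\Gamma\cap B w N_w)$ relative to $B$ and the opposite-unipotent directions. For a fixed $w$, writing a coset representative as $\gamma = x c^\ast w y$ with $x\in N$, $y\in N_w$ and $c^\ast$ as in \eqref{eq:form-of-c*}, the two unipotent integrals unfold: one of them runs over all of $N$ modulo $N\cap\Gamma$ and, after absorbing $x$, becomes the full-$N$ integral defining $W_F$ in \eqref{eq:def-orb-int-1}, while the remaining integral runs over $N_w$ and produces the orbital integral $\mathcal{O}_{\boldsymbol{\psi}}(F,\widetilde{\mathbf{m}}c^\ast w\widetilde{\mathbf{m}}^{-1})$ from \eqref{eq:def-orb-int-gen}. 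The arithmetic part — the sum over $\Gamma\cap B w N_w$ coset representatives of the product of the two $\boldsymbol{\psi}$-factors evaluated at $x$ and $y$ — is exactly $S_w(\mathbf{m},\mathbf{m};c)$, and the admissibility condition is precisely what must hold for the $y$-character to be well-defined on $w^{-1}Nw\cap N$ (otherwise the integral vanishes, consistent with setting $S_w=0$); the conjugation by $\widetilde{\mathbf{m}}$ introduces the Jacobian $\delta_w(\widetilde{\mathbf{m}})$ in the denominator. Assembling over all admissible $(w,c)$ yields the right-hand side.

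The main obstacle I expect is the rigorous justification of the interchange of summation and integration and the termwise convergence on both sides — in particular, verifying that the geometric double integral over $N\cap\Gamma\backslash N$ really does unfold to the stated absolutely convergent orbital integral over $N_w$ (the cited convergence statement at \cite[p.\ 496]{getz2024intro} covers this, but matching conventions requires care), and on the spectral side controlling the contribution of the continuous spectrum and the non-generic discrete spectrum (the latter contributes zero to the $\boldsymbol{\psi}$-period, which must be invoked). Since the proposition is quoted verbatim from \cite[Proposition~2.1]{jana2021application}, in practice I would simply cite that reference after indicating the unfolding computation above; the only genuine content to re-check is that our normalization of measures on $N$, $A$, $K_\infty$ in \eqref{eq:measure} and of $\boldsymbol{\psi}_X$ in \eqref{eq:def_char} matches theirs, so that no stray powers of $X$ or of $\det$ appear.
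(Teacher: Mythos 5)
The paper gives no proof of this proposition at all: it is quoted verbatim from \cite[Proposition~2.1]{jana2021application} (with \cite[Lemma~5.1]{assing2024principal} cited for a classical formulation), which is precisely what you propose to do in the end. Your sketch of the underlying argument — integrating the $Z\backslash\Gamma$-kernel of $R(F)$ against $\boldsymbol{\psi}$ twisted by $\widetilde{\mathbf{m}}$ in both variables, expanding spectrally to get $|\lambda_\pi(\mathbf{m})|^2 J_{\pi,\boldsymbol{\psi}}(\overline{F})/\ell(\pi)$ and geometrically via the Bruhat decomposition to get the Kloosterman sums times the orbital integrals, with $\delta_w(\widetilde{\mathbf{m}})$ arising from the conjugation — is exactly the standard derivation carried out in that reference, so your proposal is correct and takes essentially the same route as the paper.
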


\section{Spectral Estimate}\label{sec:spectral}

Before we state the main result of this section we will introduce the relevant test function. We will borrow quite a few notations, definitions, and results from \cite{nelson2023standard}. For the ease of translation we note that $\mathrm{h}$ in our case is $X^{-1}$. 

We fix a sufficiently small $\delta>0$. Also, we fix a $0$-\emph{admissible cut-off} (see \cite[\S 13.4.1]{nelson2023standard} for the definition) $\chi$ on $\g$ satisfying
\begin{itemize}
    \item $\chi$ is supported on a ball of radius $\O\left(X^{-1+\delta}\right)$ in $\mathfrak{g}$;
    \item $\chi\equiv 1$ on a ball of radius  $\O\left(X^{-1}\right)$; and
    \item $\|\partial_\alpha\chi\|_\infty\ll_\alpha X^{\O(1)}$ for any multi-indices $\alpha$.
\end{itemize}
For any symbol $a\in S_{0}^\infty(\g^\wedge)$ (see \cite[\S 13.3]{nelson2023standard} for the definition) we define its Fourier transform by
$$a^\vee(x) : =\int_{\g^\wedge}a(\xi)\exp(-\tr (x\xi))\d \xi,\quad x\in\g.$$
For any cut-off $\chi$ we recall the operator assignment from \cite[Section~13.6.1]{nelson2023standard}:
\begin{equation}
    \Op(a:\chi):=\int_\g a^\vee(x)\chi(x)\pi(\exp(x))\d x. \label{eq:def_OP_1}
\end{equation}
We also denote
$$a_{X^{-1}}(\xi):=a(X^{-1}\xi),\quad \Op_{X^{-1}}(a:\chi):=\Op(a_{X^{-1}}:\chi).$$
Letting $a$ run through a sequence of symbols in $S_0^\infty(\g^\wedge)$ that are constantly $1$ on increasing neighbourhoods of the identity, so that $a^\vee$ tends to the Dirac-$\delta$ at $x=0$, we immediately see that
\begin{equation}\label{eq:constant-symbol}
    \Op_{X^{-1}}(1:\chi)=\lim\int_\g a^\vee(x)\chi(X^{-1}x)\pi(\exp(X^{-1}x))\d x=\mathrm{Id};
\end{equation}
where $1$ on the left-hand side above denotes the symbol in $S_0^\infty(\g^\wedge)$ that is constantly $1$.

Recall the fixed compact set $\Omega_0\subseteq \mathfrak{g}^{\wedge}_{\textrm{reg}}$ from \eqref{eq:Omega_0} and fix $a\in S_{0}^{-\infty}(\g^\wedge)$ that is identically $1$ on $X^{\delta'}\Omega_0$ for $0<\delta'<\delta$.
We define $f_X\in C_c^\infty(G)$ by the formula
$$\int_G f_X(g)\pi(g)\d g:=\int_\g\chi(x)a_{X^{-1}}^{\vee}(x)\pi(\exp(x))\d x,$$
that is, $\Op_{X^{-1}}(a:\chi)=\pi(f_X)$ (in the notation of \cite[\S 13.6.1]{nelson2023standard} we can write $f_X=\widetilde{\Op}_{X^{-1}}(a:\chi)$).

We now record some required estimates and support of the chosen test function. The arguments below are essentially the same as in \cite[\S 14.9]{nelson2023spectral}. From the support condition on $\chi$ it follows that $f_X$ is supported on $K\left(X^{1-\delta''},\tau'\right)$ for some sufficiently small $\tau',\delta''>0$. Moreover, integrating by parts we see that
$a^\vee_{X^{-1}}(x)$ decays rapidly once $|x|\gg_\epsilon X^{1-\delta'+\epsilon}$, and consequently,
$$\|f_X\|_{L^1(G)}\,\ll \left\|a^\vee_{X^{-1}}\right\|_{L^1(\g)}\ll 1,\quad \|f_X\|_{L^\infty(G)}\ll X^{(1-\delta')\dim\g}.$$

Finally, define $F_X:=f_X\ast {f_X}^\vee$. From the properties of $f_X$ recorded above, we obtain that for some sufficiently small but fixed $\tau>0$ we have
\begin{equation}\label{eq:support-F}
     \textrm{supp}(F_X)\subseteq K\left(X^{1-\epsilon},\tau\right).
\end{equation}
Moreover, we also note that
$$\Vert F_X\Vert_{\infty}\,\le \|f_X\|_1\|f_X\|_\infty\ll X^{n^2}.$$
Thus we obtain
\begin{equation}\label{eq:supnorm-average}
    \int_{Z} F_X(zg)\d z \ll X^{n^2-1-\epsilon},
\end{equation}
which follows from the fact that $$\vol\left\{z\mid zg\in K\left(X^{1-\epsilon},\tau\right)\right\}\ll_\tau X^{-1+\epsilon}$$
uniformly in $g$.

We recall $F_X^\sharp$ from \eqref{eq:def_FX} where $F_X$ is as above.
\begin{prop}\label{prop:spectral_lower_bound}
Let $X>1$ be sufficiently large and let $\pi$ be a generic unitary, irreducible representation of $G$ with trivial central character. If $\Vert \mu_\infty(\pi)\Vert\, \leq X$ then we have
\begin{equation}
    J_{F_X^{\sharp},\boldsymbol{\psi}}(\pi) \gg_{\epsilon} X^{\frac{1}{6}n(n-1)(n-2)+\frac{1}{2}n(n-1)-\epsilon}.\nonumber
\end{equation}
\end{prop}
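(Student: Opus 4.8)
The plan is to convert the statement, via the conversion formula \eqref{eq:conversion}, into a lower bound for $\sum_{W\in \mathcal{B}_{\boldsymbol{\psi}_X}(\pi)}|\pi(f_X)W(1)|^2$, i.e.\ for $J_{\pi,\boldsymbol{\psi}_X}(F_X)$, and to show $J_{\pi,\boldsymbol{\psi}_X}(F_X)\gg X^{\frac12 n(n-1)-\epsilon}$ whenever $\|\mu_\infty(\pi)\|\le X$. Since $J_{\pi,\boldsymbol{\psi}_X}(F_X)=\|\pi(f_X)\|^2_{\mathrm{HS-type}}$ is a sum of squares, it suffices to exhibit a \emph{single} unit Whittaker vector $W_0\in \mathcal{W}(\pi,\boldsymbol{\psi}_X)$ with $|\pi(f_X)W_0(1)|$ not too small, and more precisely to produce enough such vectors (or one vector whose image under $\pi(f_X)$ has large Whittaker value) to recover the full power $X^{\frac12 n(n-1)}$. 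The natural candidate is a vector \emph{microlocalized} at the regular element $\tau(\pi,\boldsymbol{\psi},X)\in\Omega_0\subseteq\mathfrak{g}^\wedge_{\mathrm{reg}}$: by \cite[Lemma~7.1, Lemma~7.8]{nelson2023standard} the hypothesis $\|\mu_\infty(\pi)\|\le X$ places $\tau(\pi,\boldsymbol{\psi},X)$ in the fixed compact set $\Omega_0$, and the symbol $a$ was chosen to be identically $1$ on $X^{\delta'}\Omega_0$.

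First I would recall, from the microlocal calculus of \cite{nelson2023standard} (Sections~8 and following), the construction of a microlocalized vector $v=v_\pi$ whose wavefront/concentration is at the coadjoint point $X\tau(\pi,\boldsymbol{\psi},X)$, together with the two facts we need: (i) $\|v\|=1$ (or comparable), and (ii) the associated Whittaker function $W_v(g)=\langle \pi(g)v,\Theta_{\pi,\boldsymbol{\psi}_X}\rangle$ has $|W_v(1)|$ bounded below — this is the ``Whittaker-relative'' analogue of the fact that $v$ has a large $N$-invariant-up-to-$\boldsymbol{\psi}_X$ component, and it uses that $\tau(\pi,\boldsymbol{\psi},X)$ is regular so that the orbit meets the relevant polarization transversally. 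Next I would compute $\pi(f_X)v=\Op_{X^{-1}}(a:\chi)v$ and use the operator calculus: since $a_{X^{-1}}\equiv 1$ on a neighborhood of the concentration point $X\tau(\pi,\boldsymbol{\psi},X)$ (here the margin $X^{\delta'}$ beats the $O(X^{-1+\delta})$ support scale of $\chi$, after rescaling), $\Op_{X^{-1}}(a:\chi)$ acts on $v$ essentially as the identity up to a negligible error — compare \eqref{eq:constant-symbol} and the localization lemmas in \cite[\S13]{nelson2023standard}. Hence $\pi(f_X)v\approx v$ and $|\pi(f_X)W_v(1)|\gg |W_v(1)|\gg 1$ after normalizing $W_v$ in the Whittaker inner product. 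This already gives $J_{\pi,\boldsymbol{\psi}_X}(F_X)\gg 1$; to upgrade to $X^{\frac12 n(n-1)}$ one exploits that the space on which $\Op_{X^{-1}}(a:\chi)$ is essentially the identity is genuinely large: translating $v$ by group elements (equivalently, translating the symbol-support point within the fixed compact set corresponding to $\Omega_0$, or using an approximate orthonormal family of microlocalized vectors indexed by a lattice of spacing $\sim X^{-1/2}$ in the $\frac12 n(n-1)$ ``Whittaker'' directions transverse to $\mathfrak n$) produces $\gg X^{\frac12 n(n-1)-\epsilon}$ mutually almost-orthogonal vectors each contributing $\gg 1$ to the sum defining $J_{\pi,\boldsymbol{\psi}_X}(F_X)$. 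This matches the heuristic \eqref{eq:lower_trace_heur} with $d(\calO_\pi)=\frac{n(n-1)}{2}$ and the factor $\frac16 n(n-1)(n-2)$ from \eqref{eq:conversion}.

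The main obstacle I expect is step two: making the ``Whittaker-relative'' lower bound $|W_v(1)|\gg 1$ rigorous and, more seriously, producing the full family of $\approx X^{\frac12 n(n-1)}$ approximately orthogonal microlocalized vectors with simultaneously controlled Whittaker values at the identity. The analytic difficulty is that the Whittaker functional $\Theta_{\pi,\boldsymbol{\psi}_X}$ is a distribution (lives in $\pi^{-\infty}$), so pairing it against microlocalized vectors requires quantitative control of the vectors in the relevant Sobolev norms and a stationary-phase analysis of the $N$-integral defining $W_v$; this is where the genericity and the regularity of $\tau(\pi,\boldsymbol{\psi},X)$ (guaranteed by \cite[Lemma~7.8]{nelson2023standard}) are essential, and where one must invoke the precise Whittaker/Kirillov estimates from \cite{nelson2023standard} (Sections~8 and~14) rather than the soft orbit-method heuristic. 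A secondary technical point is bookkeeping the various $X^\epsilon$ losses coming from the slightly enlarged support of $\chi$ (radius $X^{-1+\delta}$ rather than $X^{-1}$) and from $F_X=f_X\ast f_X^\vee$ being a self-convolution; these only cost $X^\epsilon$ and are absorbed into the stated bound.
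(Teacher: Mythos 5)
Your overall architecture matches the paper's: reduce via \eqref{eq:conversion} to bounding $J_{\pi,\boldsymbol{\psi}_X}(F_X)$ from below, take a Whittaker vector microlocalized at $\tau(\pi,\boldsymbol{\psi},X)\in\Omega_0$, and use the symbol calculus (the symbol $a-1$ vanishes near the localization point, so $\Op_{X^{-1}}(a:\chi)=\pi(f_X)$ acts as $1+O_A(X^{-A})$ on it, via \cite[Theorem~14.12(i)]{nelson2023standard} and \eqref{eq:constant-symbol}). But the step that actually produces the factor $X^{\frac12 n(n-1)}$ is where your proposal has a genuine gap. You normalize the microlocalized vector to $\Vert v\Vert=1$ and only claim $\vert W_v(1)\vert\gg 1$, which yields $J_{\pi,\boldsymbol{\psi}_X}(F_X)\gg 1$, and you then try to recover the missing power by exhibiting $\approx X^{\frac12 n(n-1)}$ almost-orthogonal microlocalized vectors, each essentially fixed by $\pi(f_X)$ and each with Whittaker value $\gg1$ at the identity. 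No construction of such a family is given, and the one you sketch does not obviously work: translating $v$ by $g$ replaces $W_v(1)$ by $W_v(g)$, which is only large when $g$ lies in the small concentration region of $W_v$, and for such $g$ the translates $\pi(g)v$ are nearly collinear with $v$ rather than almost orthogonal; conversely, for $g$ spaced far enough apart to gain orthogonality the value $W_v(g)$ is no longer controlled from below. Since $J_{\pi,\boldsymbol{\psi}_X}(F_X)=\sum_{W\in\mathcal{B}}\vert(\pi(f_X)W)(1)\vert^2$ is the squared norm of the single functional $W\mapsto(\pi(f_X)W)(1)$ (not a Hilbert--Schmidt norm of $\pi(f_X)$), a many-vector lower bound is consistent with Bessel's inequality but is exactly the hard statement you would have to prove; you have identified it as the main obstacle without supplying the input that closes it.

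The paper avoids this entirely: a \emph{single} vector suffices because the localized Whittaker vectors of \cite[Definition~8.4]{nelson2023standard} have a huge value at the identity, not a value of size $1$. Concretely, by \cite[Example~6.41, Lemma~8.5]{nelson2023standard} one can choose $W'\in\mathfrak{M}(\pi,\boldsymbol{\psi},X,\delta)$ with $W'(1)\asymp X^{(1-\delta)\dim\overline{B_H}}$ and $\Vert W'\Vert^2\ll X^{(1-\delta)\dim\overline{B_H}}$, where $\dim\overline{B_H}=\tfrac12 n(n-1)$; hence the normalized vector $W=X^{-(1-\delta)\dim\overline{B_H}/2}W'$ has $\Vert W\Vert\ll 1$ and $W(1)\gg X^{\frac{n(n-1)}{4}(1-\delta)}$, and is $(\mathfrak{g},\delta)$-localized at $\tau(\pi,\boldsymbol{\psi},X)$ by \cite[Corollary~17.11]{nelson2023standard}. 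Feeding this one vector into \eqref{eq:conversion} and the symbol-calculus step already gives $J_{F_X^{\sharp},\boldsymbol{\psi}}(\pi)\gg X^{\frac16 n(n-1)(n-2)+\frac12 n(n-1)(1-\delta)}$, which is the assertion after taking $\delta$ small. So the missing idea in your write-up is precisely the quantitative lower bound $\vert W(1)\vert\gg X^{\frac{n(n-1)}{4}-\epsilon}$ for a norm-bounded localized vector; with it, the multi-vector upgrade you flag as the main difficulty is unnecessary, and without it your argument does not reach the stated exponent.
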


For the proof we will need some more preparation. We recall the definition of a \emph{module} from \cite[Definition 14.6]{nelson2023standard}. An important example is the space $\mathcal{V}$ given in \cite[Example 14.8]{nelson2023standard}, which is a $S_{0}^\infty(\g^\wedge)$-module. Finally,  we recall the notion of localization from \cite[Definition 14.2]{nelson2023standard}. The following important result can be extracted from \cite{nelson2023standard}.

\begin{lemma}\label{lm:existence_W}
Let $\pi$ be a generic unitary, irreducible representation of $G$ with trivial central character with $\Vert \mu_\infty(\pi)\Vert \,\leq X$. Then there is  $W\in\calW(\pi,\boldsymbol{\psi}_X)$ that is $(\g,\delta)$-localized at $\tau(\pi,\boldsymbol{\psi},X)$ inside $\mathcal{V}$ satisfying $$W(1)\gg X^{\frac{n(n-1)}{4}(1-\delta)} \text{ and } \|W\|^2\,\ll 1.$$
\end{lemma}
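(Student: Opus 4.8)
The plan is to produce $W$ from the microlocal machinery of \cite{nelson2023standard}, working throughout in the Whittaker model with $\mathrm{h}=X^{-1}$. Write $\tau:=\tau(\pi,\boldsymbol{\psi},X)$. The first point to record is that, since $\n{\mu_\infty(\pi)}\le X$, the discussion around \eqref{eq:Omega_0} -- which rests on \cite[Lemma~7.8]{nelson2023standard} -- puts $\tau$ in the fixed compact set $\Omega_0\subseteq\g^{\wedge}_{\textrm{reg}}$. So $\tau$ is a \emph{regular} element of $\g^{\wedge}$ lying in a set independent of $\pi$ and $X$, and it is by construction compatible with $\boldsymbol{\psi}_X$ (this is what the subscript $\boldsymbol{\psi}$ in $\tau(\pi,\boldsymbol{\psi},X)$ records; cf.\ \cite[Definition~7.7]{nelson2023standard}). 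These are the two inputs the argument uses: regularity, which makes the microlocal calculus available with constants uniform in $\pi$ and $X$, and $\boldsymbol{\psi}_X$-compatibility, which ties it to the Whittaker functional.

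The first step is the construction of the vector. Using that $\mathcal{V}$ from \cite[Example~14.8]{nelson2023standard} is an $S_0^\infty(\g^{\wedge})$-module and that $\tau$ is regular, I would invoke the construction of localized vectors in \cite[\S 14]{nelson2023standard} to obtain a smooth $W\in\calW(\pi,\boldsymbol{\psi}_X)$ that is $(\g,\delta)$-localized at $\tau$ inside $\mathcal{V}$, with $\n{W}^2\asymp 1$. That already gives $\n{W}^2\ll 1$, so only the lower bound for $W(1)$ remains.

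For that bound, the idea is to turn localization into concentration of the Whittaker function. The localization property gives, schematically, $\pi(\exp Z)W=e(X\langle\tau,Z\rangle)W+O(X^{-c\delta}\n{W})$ for $Z$ ranging over a ball of radius $O(X^{-1+\delta})$ in the directions relevant to the pairing below. Feeding this into the Whittaker inner product and using $\boldsymbol{\psi}_X$-equivariance to handle the $N$-directions exactly, one expects $h\mapsto W\!\left(\begin{smallmatrix}h & 0\\ 0 & 1\end{smallmatrix}\right)$ to equal $W(1)$ times a unit-modulus factor on a neighbourhood of the identity coset in $(N\cap\GL_{n-1}(\R))\backslash\GL_{n-1}(\R)$ of volume $\asymp X^{-\frac12 n(n-1)(1-\delta)}$, and to be negligible outside it; the identity coset is the right place because, after the $a_X$-conjugation built into $\boldsymbol{\psi}_X$, it lands at the conductor point of the $\GL_{n-1}$-Whittaker function, and $(N\cap\GL_{n-1}(\R))\backslash\GL_{n-1}(\R)$ has dimension $\tfrac12 n(n-1)$. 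This would give
\begin{equation}
  1\asymp\n{W}^2=\int_{(N\cap\GL_{n-1}(\R))\backslash\GL_{n-1}(\R)}\Bigl|W\!\left(\begin{smallmatrix}h & 0\\ 0 & 1\end{smallmatrix}\right)\Bigr|^2\d h\asymp |W(1)|^2\, X^{-\frac12 n(n-1)(1-\delta)},\nonumber
\end{equation}
hence $W(1)\gg X^{\frac14 n(n-1)(1-\delta)}$; this is the expected $r^{-d/2}$ with $r=X^{-1+\delta}$ and $d=\tfrac12 n(n-1)$, the loss of $\delta$ stemming from the support radius $X^{-1+\delta}$ of the cut-off $\chi$.

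The hard part will be the \emph{lower} bound in this last display: one must know that the localized Whittaker function is genuinely non-degenerate -- essentially of constant size $|W(1)|$ -- on the \emph{whole} concentration ball near the identity coset, so that the integral is $\gg|W(1)|^2 X^{-\frac12 n(n-1)(1-\delta)}$ rather than merely $\ll$. This is the archimedean analogue of the non-vanishing of the Whittaker value of a $p$-adic newvector (or regular-type vector, cf.\ \cite[\S 6]{assing2024principal}, \cite[\S 2]{assing2024density}), and it is precisely where the regularity of $\tau$ and the finer structure of the localized vectors of \cite{nelson2023standard} -- beyond their defining localization estimate -- must be used; in practice I would expect to import the corresponding ``value at the identity'' statement directly from \cite{nelson2023standard} rather than reprove it.
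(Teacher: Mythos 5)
There is a genuine gap, and it sits exactly where you flagged it. Your plan is: take \emph{some} $(\g,\delta)$-localized vector $W$ at $\tau(\pi,\boldsymbol{\psi},X)$ with $\|W\|^2\asymp 1$, and then \emph{deduce} the lower bound on $W(1)$ from localization via the heuristic $\|W\|^2\asymp |W(1)|^2\,X^{-\frac12 n(n-1)(1-\delta)}$. That deduction cannot work from localization alone: being $(\g,\delta)$-localized at $\tau$ inside $\mathcal{V}$ only controls $\pi(\exp Z)W-e(\langle\cdots\rangle)W$ up to admissible errors and says nothing about the value of the Whittaker function at a single point. For instance, a suitable linear combination of two microlocalized vectors at the same $\tau$ is still (approximately) localized with $\|W\|\asymp 1$, yet can be arranged to nearly vanish at $g=1$; so ``localized $\Rightarrow$ $|W|$ of size $|W(1)|$ on the whole concentration ball, with $W(1)$ large'' is false as a general implication, and your closing remark that you would ``import the value-at-the-identity statement'' from \cite{nelson2023standard} has the logic backwards: in that framework there is no such theorem about arbitrary localized vectors; the value at the identity is built into the \emph{construction} of a specific family of vectors, not derived from localization.

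The paper closes the gap by running the argument in the opposite order. One takes $W'\in\mathfrak{M}(\pi,\boldsymbol{\psi},X,\delta)$ as in \cite[Definition~8.4]{nelson2023standard}: by definition its restriction $\gamma=W'\vert_{\overline{B_H}}$ to the lower-triangular Borel $\overline{B_H}\subset\GL_{n-1}(\R)$ is a prescribed bump in $\mathfrak{C}(\overline{B_H},0,X,\boldsymbol{\psi})$ (see \cite[Example~6.41]{nelson2023standard}), supported on $1+\O(X^{-1+\delta_+})$, and one may choose it with $W'(1)=\gamma(1)=\|\gamma\|_\infty\asymp X^{(1-\delta)\dim\overline{B_H}}$ — so the size of $W'(1)$ is prescribed, not inferred. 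The norm bound $\|W'\|^2\ll X^{(1-\delta)\dim\overline{B_H}}$ is \cite[Lemma~8.5]{nelson2023standard} (only an upper bound is needed, not $\asymp$), and the localization of precisely these vectors is \cite[Corollary~17.11]{nelson2023standard}. Rescaling $W=X^{-(1-\delta)\dim\overline{B_H}/2}W'$ with $\dim\overline{B_H}=\tfrac12 n(n-1)$ gives $W(1)\gg X^{\frac{n(n-1)}{4}(1-\delta)}$ and $\|W\|^2\ll 1$. Your volume heuristic correctly predicts the exponent, and your use of \cite[Lemma~7.8]{nelson2023standard} to place $\tau(\pi,\boldsymbol{\psi},X)$ in $\Omega_0\subseteq\g^{\wedge}_{\mathrm{reg}}$ matches the paper; but as a proof the proposal is incomplete until the identity-value input is replaced by the explicit $\mathfrak{M}$-construction (or an equivalent), rather than asserted as a consequence of localization.
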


\begin{proof}
We recall $\mathfrak{M}(\pi,\boldsymbol{\psi},X,\delta)\subset \calW(\pi,\boldsymbol{\psi}_X)$ from \cite[Definition 8.4]{nelson2023standard}. In what follows, $\overline{B_H}$ denotes (following \cite{nelson2023standard}) the Borel subgroup of lower triangular matrices in $\GL_{n-1}(\R)$. Any element $W'\in\mathfrak{M}(\pi,\boldsymbol{\psi},X,\delta)$ can be realized by invoking that
$$\gamma:=W'\vert_{\overline{B_H}}\,\in\mathfrak{C}(\overline{B_H},0,X,\boldsymbol{\psi})$$
where the latter object is defined in \cite[Definition 6.35]{nelson2023standard}. In particular, from \cite[Example 6.41]{nelson2023standard}, we have:
\begin{itemize}
    \item $\gamma$ is supported on $1+\O\left(X^{-1+\delta_+}\right)$ for some fixed $\delta_+>\delta$;
    \item We have
    $$\partial^\alpha\gamma(x) \ll_{\alpha,A} X^{(1-\delta)(\dim \overline{B_H}+|\alpha|)}\left(1+X^{1-\delta}\dist(x,1)\right)^{-A}$$
    for any multi-index $\alpha\in\Z_{\ge 0}^{\dim\overline{B_H}}$.
\end{itemize}
We fix a $\gamma\in\mathfrak{C}(\overline{B_H},0,X,\boldsymbol{\psi})$, consequently $W'\in\mathfrak{M}(\pi,\boldsymbol{\psi},X,\delta)\subset \calW(\pi,\boldsymbol{\psi}_X)$, satisfying the above properties, along with,
\begin{equation*}\label{eq:l-infty-condition}
W'(1)=\gamma(1) = \|\gamma\|_\infty\, \asymp X^{(1-\delta)\dim \overline{B_H}}.
\end{equation*}
Finally, we recall from \cite[Lemma 8.5]{nelson2023standard} that such $W$ satisfies
\begin{equation*}\label{eq-l-2-condition}
    \|W'\|^2 \, \ll X^{(1-\delta)\dim \overline{B_H}}.
\end{equation*}
We recall from \cite[Corollary 17.11]{nelson2023standard} that
$X^{-(1-\delta)\frac{\dim\overline{B_H}}{2}}W'$ for each $W'\in\mathfrak{M}(\pi,\boldsymbol{\psi},X,\delta)$ is $(\g,\delta)$-localized at $\tau(\pi,\boldsymbol{\psi},X)$ inside $\mathcal{V}$. Thus $W=X^{-(1-\delta)\frac{\dim\overline{B_H}}{2}}W'$ is as desired.
\end{proof}

We can now complete the proof of Proposition~\ref{prop:spectral_lower_bound}. Let $\pi$ be as in the statement and take $W\in\calW(\pi,\boldsymbol{\psi}_X)$ as constructed in Lemma~\ref{lm:existence_W}. Then, from \eqref{eq:conversion} we obtain the lower bound
\begin{equation}
    J_{F_X^{\sharp},\boldsymbol{\psi}}(\pi) \gg_{\epsilon} X^{\frac{1}{6}n(n-1)(n-2)} \vert \pi(f_X)W(1)\vert^2.\nonumber
\end{equation}
Thus it is sufficient to show that
\begin{equation}
    \pi(f_X)W(1) = (1+o(1))W(1)\gg X^{\frac{n(n-1)}{4}(1-\delta)}.\nonumber
\end{equation}
To do so we will use the specific choice of $f_X$ as well as the localization properties of $W$. 

By the choice of $a$ in the definition of $f_X$ we find that the symbol $a-1\in S^\infty_{0}(\g^\wedge)$ vanishes identically on $\tau(\pi,\boldsymbol{\psi},X)+\o\left(1\right)$. Thus applying \cite[Theorem 14.12 (i)]{nelson2023standard} we obtain
$$\Op_{X^{-1}}(a-1:\chi)W\in X^{-A}\,\mathcal{V},\quad\text{for all } A>0.$$
Using \eqref{eq:constant-symbol} we write
$$\Op_{X^{-1}}(a:\chi)W(1)-W(1) \ll_A X^{-A} \sup_{W'\in\mathcal{V}} |W'(1)|.$$
Using a Sobolev inequality (see \emph{e.g.}, \cite[Proof of Lemma 16.1]{nelson2023standard}) and definition of $\mathcal{V}$ in \cite[Example 14.8]{nelson2023standard} we obtain that the right-hand side above is $\O_A\left(X^{-A}\right)$. Expanding the definition of $\Op$ (see \eqref{eq:def_OP_1}) we find that
\begin{equation}
    \pi(f_X)W(1) = \Op_{X^{-1}}(a:\chi)W(1) = W(1)+\O_A(X^{-A}). \nonumber
\end{equation}
This completes the proof.

\section{Orbital Integrals}\label{sec:orbit}

Throughout this section we assume $n\ge 4$. We fix a test function $F$ with support in $ZK(X,\tau)^{\sharp}$ for $\tau>0$ sufficiently small and $X\geq 1$ a large parameter. The main result, Proposition~\ref{prop:geometric_bound} below, is a general estimate for the geometric side of the Kuznetsov formula, as in Proposition \ref{prop:Kuznetsov}, for this test function $F$

\begin{prop}\label{prop:geometric_bound}
Fix $n\geq 4$. Let $X>1$ be sufficiently large and let $\tau>0$ be sufficiently small (but fixed). Suppose that $F\in \mathcal{C}_c^{\infty}(G)$ is a test function supported in $K(X,\tau)$. Finally, let $\mathbf{m}=(m,1,\ldots,1)\in\N^{n-1}$ with $m \leq X^{n+2}$. Then we have
\begin{equation}
    \mathop{\sum\sum}_{\genfrac{}{}{0pt}{}{w\in W\,c\in \Z_{\neq 0}^{n-1}}{\mathrm{admisible}}} \frac{S_w(\mathbf{m},\mathbf{m};c)}{\delta_w(\widetilde{\mathbf{m}})} \mathcal{O}_{\boldsymbol{\psi}}(F,\widetilde{\mathbf{m}}c^{\ast}w\widetilde{\mathbf{m}}^{-1}) \ll_{\epsilon} \Vert F\Vert_{\infty} X^{\frac{1}{6}n(n-1)(n-2)+\epsilon}.\nonumber
\end{equation}
\end{prop}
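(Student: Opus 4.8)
The plan is to estimate the geometric side termwise, using only the support condition $\supp(F)\subseteq K(X,\tau)$ together with the trivial bounds \eqref{eq:triv-bound-klooster} for the Kloosterman sums and a trivial bound for the orbital integrals $\mathcal{O}_{\boldsymbol{\psi}}(F,\widetilde{\mathbf{m}}c^{\ast}w\widetilde{\mathbf{m}}^{-1})$. The first step is to observe that the orbital integral can only be nonzero when the point $\widetilde{\mathbf{m}}c^{\ast}w\widetilde{\mathbf{m}}^{-1}$ lies in the Bruhat cell $BwN_w$ \emph{and} the integration variables in \eqref{eq:def-orb-int-1}--\eqref{eq:def-orb-int-gen} can be chosen so that $xg y$ lands in $K(X,\tau)$, which is a tiny neighbourhood of $\I_n$ of radius $\tau/X$. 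Since $w$ must be of the admissible form \eqref{eq:admissible-weyl}, and since $K(X,\tau)$ shrinks around the identity, the matrix entries of $\widetilde{\mathbf{m}}c^{\ast}w\widetilde{\mathbf{m}}^{-1}$ are forced to be extremely constrained; in particular the ``Weyl element part'' $w$ cannot move entries far off the diagonal without violating proximity to $\I_n$. Following \cite[Section~3]{assing2024density} closely, one shows that the only surviving Weyl elements are $w=1$ and $w=w_\ast$, and that for each of them the admissible moduli $c$ contributing are confined to a short range dictated by $X$ and $m\le X^{n+2}$.

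The second step is to bound each surviving orbital integral trivially. Writing out \eqref{eq:def-orb-int-gen} as an iterated integral over $N$ and $N_w$, the integrand is bounded by $\|F\|_\infty$ times the characteristic function of the set where $x g y\in K(X,\tau)$; since that set has volume $\ll (\tau/X)^{\dim N + \dim N_w - (\text{something})}$ by a change of variables adapted to the Bruhat decomposition, one gets a bound of the shape $\|F\|_\infty$ times a power of $X^{-1}$. For $w=1$ only the modulus $c=(1,\dots,1)$ and $\mathbf{m}=\mathbf{n}$ survive, $S_1=1$, $\delta_1\equiv 1$, and the orbital integral is $\int_N F(x)\overline{\boldsymbol\psi(x)}\,\d x\ll \|F\|_\infty \vol(N\cap K(X,\tau))\ll \|F\|_\infty X^{-\frac12 n(n-1)}$; since here $\widetilde{\mathbf m}$-conjugation is trivial this is already negligible compared to the target. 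For $w=w_\ast$ one uses \eqref{eq:value-delta}, namely $\delta_{w_\ast}(\widetilde{\mathbf m})=m^{n-1}$, together with $\delta_{w_\ast}$ controlling how conjugation by $\widetilde{\mathbf m}$ and $c^\ast$ scales the measure on $N_{w_\ast}$; combining the trivial Kloosterman bound $|c_1\cdots c_{n-1}|^{1+\epsilon}$, the count of admissible $c$ in the relevant range, the factor $m^{-(n-1)}$, and the volume bound for the orbital integral, everything collapses to $\|F\|_\infty X^{\frac16 n(n-1)(n-2)+\epsilon}$ after using $m\le X^{n+2}$.

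The main obstacle is the bookkeeping in the $w=w_\ast$ term: one must correctly track the interaction of the three scalings — conjugation by $\widetilde{\mathbf m}$, conjugation by $c^\ast$, and the requirement that the product lie within $\tau/X$ of the identity — to see that the admissible $c$'s are constrained to $|c_i|\ll$ (a controlled power of) $X$ and that the resulting exponents in $X$ and $m$ assemble into exactly $\frac16 n(n-1)(n-2)$ after invoking $m\le X^{n+2}$. This is precisely the point where the hypothesis $m\le X^{n+2}$ is used, and it is the numerical heart of why the threshold $X^{n+2}$ appears. I would also need to verify the claim, implicit in \cite[Section~3]{assing2024density}, that for $n\ge 4$ no other admissible Weyl elements of the form \eqref{eq:admissible-weyl} can contribute — this is a short case analysis on the block sizes $d_1,\dots,d_k$, ruling out everything except the two blocks-of-sizes $(1,\dots,1)$ (giving $w=1$) and $(1,n-2,1)$-type configuration (giving $w=w_\ast$) once one demands compatibility with proximity to $\I_n$. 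Finally I would assemble the two surviving contributions and conclude.
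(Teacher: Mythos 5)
Your high-level strategy (use the support of $F$ to kill all Weyl elements except $w=1$ and $w=w_\ast$, then estimate the survivors trivially with $\Vert F\Vert_\infty$ and the trivial Kloosterman bound) is indeed the paper's strategy, but the proposal has a genuine gap in two places. First, the support. The proof (and the only way the proposition is used, in Lemma~\ref{lm:Hecke_bound}) concerns $F$ supported in $ZK(X,\tau)^{\sharp}=Za_X^{-1}K(X,\tau)a_X$, as fixed in the preamble of Section~\ref{sec:orbit}; the "$K(X,\tau)$" in the statement is a slip. This changes the picture completely: $\vol\bigl(N\cap K(X,\tau)^{\sharp}\bigr)\asymp X^{\frac16 n(n-1)(n-2)}$ (the $(i,j)$ entry is stretched by $X^{\,j-i}$), so the $w=1$ term $W_F(1)$ is precisely the \emph{main} term producing the exponent $\frac16 n(n-1)(n-2)$ (Lemma~\ref{lm:trivial_identity}), not the negligible term of size $X^{-\frac12 n(n-1)}$ you obtain from the unconjugated ball. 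Your reading leaves the stated exponent unexplained and your treatment of both terms would not apply to the test function the proposition is actually invoked for.

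Second, and more seriously, the $w_\ast$ contribution is not "bookkeeping" that collapses automatically; it is the bulk of the paper's argument (Lemma~\ref{lm:orb_int_est}). One needs: the classification of admissible moduli $c=(r,rs,\ldots,rs^{n-2})$ or its reverse from \cite[Theorem~4.4]{assing2024principal}; explicit coordinates on $N_{w_\ast}$ and the Bruhat decomposition $b\cdot r(\mathbf{x},y,\mathbf{z})\in N^{\top}$ on the $w_\ast$-cell; an archimedean Iwahori-type decomposition ($ut\overline{u}\in K(X,\tau)\Rightarrow u,t,\overline{u}\in K(X,\tau')$); several changes of variables; the support constraints $r<m/X^{n}$ and $s\ll\max\bigl(m/(rX^{n+1}),1\bigr)$; a separate treatment of $s=1$ versus $s\neq 1$ (the $s=1$ case requires a further change of variables and a recursive estimate of the $\mathbf{x}$-integral); and only then the sum over $r,s$ weighted by $|c_1\cdots c_{n-1}|^{1+\epsilon}$ and $\delta_{w_\ast}(\widetilde{\mathbf m})^{-1}=m^{-(n-1)}$, which is where $m\le X^{n+2}$ enters the final assembly. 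None of this is carried out or even set up in your sketch, and you acknowledge as much. The elimination of the remaining Weyl elements is, as you say, a case analysis (on $d_k$ and $d_1$, cf.\ Lemma~\ref{lm:weyl_elem}), but it too uses the $^{\sharp}$-conjugated support and the threshold $X^{n+2}$, so it must be redone with the correct support. As it stands the proposal identifies the right skeleton but does not prove the proposition.
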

This estimate will be obtained by estimating everything trivially, using only the support and the $L^{\infty}$-norm of $F$.

The following Lemma is inspired by \cite[Lemma~3.1]{assing2024density} and \cite[Lemma~3.2]{jana2021application}. 

\begin{lemma}\label{lm:weyl_elem}
Let $w\in W\setminus \{1,w_{\ast}\}$ be of the form given in \eqref{eq:admissible-weyl}
and 
$\mathbf{m}:=(m,1,\ldots,1)\in \N^{n-1}$. If $\widetilde{\mathbf{m}}xc^{\ast}wy\widetilde{\mathbf{m}}^{-1} \in K(X,\tau)^{\sharp}$ for some $x\in N$ and some $y\in N_w$ then there exists $j\in \{1,\ldots, n-1\}$ such that $$\vert c_j\vert\, < \frac{m}{X^{n+2}}.$$
\end{lemma}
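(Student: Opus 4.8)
The statement is a constraint on the moduli $c$ that can appear for a non-trivial, non-$w_\ast$ admissible Weyl element $w$, under the support condition coming from $F$. The approach is entirely elementary: one unpacks what it means for an element of the form $\widetilde{\mathbf m}\,x\,c^\ast w\,y\,\widetilde{\mathbf m}^{-1}$ to lie in $K(X,\tau)^\sharp = a_X^{-1}K(X,\tau)a_X$, i.e. that conjugating by $a_X$ brings it within $\tau/X$ of the identity in the sup-norm. The plan is to identify a \emph{single matrix entry} of the product that is forced to be small by this condition, yet which — when $w\notin\{1,w_\ast\}$ — is pinned down (up to the conjugating diagonal factors) to be an entry of $c^\ast$, hence of size comparable to $|c_{j-1}/c_j|$ or $|c_j|$ for some $j$; comparing the two sizes yields the claimed inequality.

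First I would write $g := \widetilde{\mathbf m}\,x\,c^\ast w\,y\,\widetilde{\mathbf m}^{-1}$ and consider $h := a_X g a_X^{-1}$, so the hypothesis says $\|h - \I_n\|_\infty < \tau/X$; in particular every entry of $h$ is $O(\tau/X)$ off-diagonal and within $O(\tau/X)$ of $1$ on the diagonal. Since $x\in N$ and $y\in N_w\subseteq N$ are upper-triangular unipotent, and $\widetilde{\mathbf m}$, $c^\ast$, $a_X$ are diagonal, the combinatorial structure of the product $x c^\ast w y$ is governed by the permutation matrix $w$: the non-zero pattern is that of $w$, suitably "filled in" by the unipotent factors above and to the right. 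Because $w$ has the block anti-diagonal form \eqref{eq:admissible-weyl} with blocks $d_1,\dots,d_k$ and $w\neq 1, w_\ast$, the permutation $w$ is neither the identity nor the specific near-identity transposition-type element $w_\ast$; this means there is a position $(i_0,j_0)$ with $i_0 > j_0$ (strictly below the diagonal) at which $w$ has a $1$ and which is \emph{not} killed by multiplying on the left by the upper-triangular $x$ nor on the right by $y\in N_w$ — one uses here that $N_w = w^{-1}N^\top w\cap N$ precisely records which unipotent directions survive. At that entry the product $x c^\ast w y$ equals exactly the corresponding entry of $c^\ast w$, namely $\pm c_{j_0-1}/c_{j_0}$ (or $\pm 1/c_{n-1}$, $\pm c_1$ at the ends), up to the diagonal scalings; conjugation by $\widetilde{\mathbf m}$ and $a_X$ multiplies this entry by a monomial in $m$ and a power of $X$ read off from $i_0 - j_0$ and the shape of $\mathbf m = (m,1,\dots,1)$.

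The last step is bookkeeping: the surviving entry of $h$ has absolute value a rational monomial in the $c_j$'s times $X^{e}$ times (a power of) $m$, where the exponent $e$ is determined by $i_0 - j_0$ and is at least $1$ away from being balanced — this is exactly where the cases $w=1$ (no sub-diagonal entry) and $w=w_\ast$ (the one case where the relevant exponent is exactly balanced, so no information drops out) are excluded. Forcing this quantity to be $< \tau/X$ and clearing the $X$-powers and the monomial in $m$ gives $|c_{j_0}| < m/X^{n+2}$ for the appropriate index $j$ (after renaming $j_0$ or $j_0-1$ to $j$, noting $c_0 = c_n = 1$ handles the boundary). The main obstacle is the second step: cleanly proving that for every admissible $w\notin\{1,w_\ast\}$ there is a sub-diagonal position of $w$ that is not "absorbed" by the unipotent factors $x$ and $y$, and then correctly tracking the exponent of $X$ (and the power of $m$) attached to that entry under the two diagonal conjugations, so that the threshold comes out to be exactly $X^{n+2}$. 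This is a careful but routine case analysis over the block sizes $d_1,\dots,d_k$, closely paralleling \cite[Lemma~3.1]{assing2024density} and \cite[Lemma~3.2]{jana2021application}; I expect the argument to reduce, after choosing the extreme block, to checking a one- or two-parameter inequality among the $d_i$ and the position indices.
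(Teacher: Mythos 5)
Your structural observations (which entries of $xc^{\ast}wy$ survive the unipotent factors, and how conjugation by $a_X$ and $\widetilde{\mathbf{m}}$ rescales a given position) are correct, but the core of your plan --- extracting the conclusion from a \emph{single} surviving below-diagonal entry --- cannot work, and this is precisely where the paper's proof does something different. Under conjugation by $a_X$, an entry of $xc^{\ast}wy$ at position $(i,j)$ with $i>j$ is only forced to satisfy a bound of the shape $\ll \tau m/X^{i-j+1}$, and since $i-j\le n-1$ the strongest bound a single entry can ever produce is $\ll \tau m/X^{n}$, attained exactly at the bottom-left corner where the surviving entry is $c_1$; note that this corner bound holds for $w_{\ast}$ as well (cf.\ Remark~\ref{rem:w-ast-c-bound}), so no single entry can yield the threshold $X^{n+2}$, nor can it even separate $w_{\ast}$ from other elements with $d_1=d_k=1$ (e.g.\ block sizes $(1,2,1,1)$ when $n=5$). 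Moreover, away from the corners the surviving entry is a consecutive ratio $c_i/c_{i-1}$, not an individual $c_j$, so "clearing the $X$-powers" from one inequality does not bound any $|c_j|$ at all.

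The paper's proof instead combines \emph{several} entry bounds multiplicatively. When the last block has size $d_k\ge 2$, all $d_k$ diagonal entries $c_{d_k}/c_{d_k-1},\ldots,c_2/c_1,c_1$ of the bottom-left block of $c^{\ast}w$ survive in $xc^{\ast}wy$, each bounded by $\tau/X^{n-d_k+1}$ (with one extra factor $m$ in the last row); telescoping the product gives $|c_{d_k}|\le \tau^{d_k}m/X^{d_k(n-d_k+1)}$, and $d_k(n-d_k+1)\ge n+2$ for $2\le d_k\le n-1$, $n\ge 4$. The case $d_k=1\neq d_1$ is reduced to this by the involution $g\mapsto w_l g^{-\top}w_l^{-1}$, and in the remaining case $d_1=d_k=1$ the corner bound $|c_1|\le \tau m/X^{n}$ is multiplied with a second surviving-entry bound on $|c_2/c_1|$ of size $\tau/X^{n-d_{k-1}-1}$; it is only here that $w\neq w_{\ast}$ enters, via $d_{k-1}\le n-3$, giving $|c_2|<m/X^{n+2}$. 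So the missing idea in your proposal is this chaining of ratio bounds along a whole block (and, in the last case, across two blocks); without it the argument stalls at $m/X^{n}$.
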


\begin{proof}
Keeping the same notation as in \eqref{eq:admissible-weyl} we note that $1\le d_k<n$, since $w\neq 1$. We first consider the case $1<d_k< n$. As in the proof of \cite[Lemma~4.2]{assing2024principal} we obtain
\begin{equation}
    xc^{\ast}wy = \left(\begin{matrix} \text{\huge $\ast$} & \cdots & \cdots & \text{\huge $\ast$}  \\ \frac{c_{d_k}}{c_{d_k-1}} & \ast & \ast &\vdots \\ & \ddots & \ast &\vdots \\ & & c_1 & \text{\huge $\ast$} \end{matrix}\right) \in \widetilde{\mathbf{m}}^{-1}K(X,\tau)^{\sharp}\widetilde{\mathbf{m}}
\end{equation}
For $1\leq i\leq d_k$ we have
\begin{equation*}
    \left\vert \frac{c_i}{c_{i-1}}\right\vert \,\leq \frac{\tau}{X^{n-d_k+1}}\left(1+(m-1)\delta_{i=1}\right)
\end{equation*}
Consequently, we obtain 
\begin{equation*}
    \vert c_{d_k}\vert \, \leq \frac{\tau^{d_k}}{X^{d_k(n-d_k+1)}}m.
\end{equation*}
We conclude this case using that for $2\leq d_k\leq n-1$ and $n\geq 4$ we have $d_k(n-d_k+1)\geq n+2$. 

Next we consider $d_k=1 \neq d_1$. Here we define the map $\iota\colon g\mapsto w_l g^{-\top} w_l^{-1}$, where $w_l$ is the long Weyl element.  Note that $\iota(K(X,\tau)^{\sharp}) \subseteq K(X,\tau')^{\sharp}$ for some sufficiently small $\tau'>0$.
We set $\widetilde{\mathbf{n}} := w_l\widetilde{\mathbf{m}}w_l^{-1}$ where $\mathbf{n}=(1,\dots,1,m)$.
Note that $\iota(w)=w^{-1}=\left(\begin{matrix} & & \I_{d_k} \\ & \iddots & \\ \I_{d_1} & & \end{matrix}\right)$ and $y':=\iota(y)\in N_{w^{-1}}$ if and only if $y\in N_w$. Thus for $x':=\iota(x)\in N$ and $\widetilde{c}:=(c_{n-1},\dots,c_1)$ we obtain
\begin{equation}
    \iota(xc^{\ast}wy) = x'\widetilde{c}^\ast w^{-1} y' \in \widetilde{\mathbf{n}} K(X,\tau')^{\sharp}\widetilde{\mathbf{n}}^{-1}
    \nonumber
\end{equation}
We apply a similar argument as above and the result follows.

Finally, we consider $d_k=d_1=1$. Since $w\neq w_{\ast}$, we have $d_{k-1}<n-2$. Here we compute
\begin{equation*}
    c^{\ast}w = \left(\begin{matrix} & & & & \text{\huge $\ast$}\\ & \frac{c_{d_{k-1}+1}}{c_{d_{k-1}}} & & & \\ & & \ddots & & \\ & & & \frac{c_2}{c_1} & \\ c_1 & & & & \end{matrix}\right).\nonumber
\end{equation*}
First, we conclude that
\begin{equation*}
    \vert c_1\vert\, \leq \frac{\tau}{X^{n}}m
\end{equation*}
Arguing as in the proof of \cite[Lemma~4.2]{assing2024principal} we conclude that
\begin{equation}
    \vert c_2\vert\, \leq \tau^2\frac{m}{X^{2n-d_{k-1}-1}} <\frac{m}{X^{n+2}}
\end{equation}
since $d_{k-1}+1\le n-2$ and $\tau$ sufficiently small. This completes the proof.
\end{proof}

Recall the definition of the orbital integral $\mathcal{O}_{\boldsymbol{\psi}}$ from \eqref{eq:def-orb-int-gen}. The following corollary immediately follows from Lemma~\ref{lm:weyl_elem}.

\begin{cor}\label{cor:support_geo}
Let $n,F,\widetilde{\mathbf{m}}$ be as in Proposition \ref{prop:geometric_bound} and $w$ be as in Lemma \ref{lm:weyl_elem}. Then we have
\begin{equation}
    \mathcal{O}_{\boldsymbol{\psi}}(F;\widetilde{\mathbf{m}}c^{\ast}w\widetilde{\mathbf{m}}^{-1})=0\nonumber
\end{equation}
for all $c\in\Z_{\neq 0}^{n-1}$.
\end{cor}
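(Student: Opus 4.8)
The goal is to deduce Corollary~\ref{cor:support_geo} from Lemma~\ref{lm:weyl_elem}. The plan is to unwind the definitions \eqref{eq:def-orb-int-1} and \eqref{eq:def-orb-int-gen} of the orbital integral, track carefully how the support condition on $F$ propagates, and then invoke Lemma~\ref{lm:weyl_elem} to force the integrand to vanish identically whenever $\mathbf{m}=(m,1,\ldots,1)$ with $m\le X^{n+2}$ and $w\in W\setminus\{1,w_\ast\}$ is of the admissible form \eqref{eq:admissible-weyl}.

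First I would write, for $g=\widetilde{\mathbf{m}}c^\ast w\widetilde{\mathbf{m}}^{-1}$,
\begin{equation}
    \mathcal{O}_{\boldsymbol{\psi}}(F;g) = \int_{N_w}\int_N F(x\,\widetilde{\mathbf{m}}c^\ast w\widetilde{\mathbf{m}}^{-1}\,y)\,\overline{\boldsymbol{\psi}(x)}\,\overline{\boldsymbol{\psi}(y)}\d x\d y,\nonumber
\end{equation}
and then perform the change of variables $x\mapsto \widetilde{\mathbf{m}}x\widetilde{\mathbf{m}}^{-1}$ and $y\mapsto \widetilde{\mathbf{m}}^{-1}y\widetilde{\mathbf{m}}$ (both of which preserve $N$, resp.\ $N_w$, since conjugation by the diagonal element $\widetilde{\mathbf{m}}$ acts on root subgroups) to rewrite the inner argument as $\widetilde{\mathbf{m}}\,x\,c^\ast w\,y\,\widetilde{\mathbf{m}}^{-1}$ up to harmless Jacobian factors and modified (still unitary) characters. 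Since $F$ is supported in $K(X,\tau)$, the only contribution comes from $(x,y)$ with $\widetilde{\mathbf{m}}xc^\ast wy\widetilde{\mathbf{m}}^{-1}\in K(X,\tau)$; absorbing the conjugation by $a_X$ implicit in passing between $K(X,\tau)$ and $K(X,\tau)^\sharp$ (recall $K(X,\tau)^\sharp = a_X^{-1}K(X,\tau)a_X$ and the hypotheses of Proposition~\ref{prop:geometric_bound} match those of Lemma~\ref{lm:weyl_elem}), we are exactly in the situation $\widetilde{\mathbf{m}}xc^\ast wy\widetilde{\mathbf{m}}^{-1}\in K(X,\tau)^{\sharp}$.

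At this point Lemma~\ref{lm:weyl_elem} applies: it tells us that for the integrand to be nonzero there must exist $j\in\{1,\ldots,n-1\}$ with $|c_j|<m/X^{n+2}\le 1$. But $c_j\in\Z_{\neq 0}$, so $|c_j|\ge 1$, a contradiction. Hence the integrand vanishes for every $(x,y)\in N\times N_w$, so $\mathcal{O}_{\boldsymbol{\psi}}(F;\widetilde{\mathbf{m}}c^\ast w\widetilde{\mathbf{m}}^{-1})=0$ for all $c\in\Z_{\neq 0}^{n-1}$, which is the claim.

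The only place requiring care — and the step I would flag as the main obstacle — is bookkeeping the conjugations and making sure the support statement I feed into Lemma~\ref{lm:weyl_elem} is literally the one in its hypothesis ($\widetilde{\mathbf{m}}xc^\ast wy\widetilde{\mathbf{m}}^{-1}\in K(X,\tau)^{\sharp}$), rather than a variant. In particular one must check that the change of variables on $N_w$ is legitimate (that $\widetilde{\mathbf{m}}$ normalizes $N_w$, which holds because $N_w$ is a product of root subgroups and $\widetilde{\mathbf{m}}$ is diagonal) and that the Jacobians and twisted characters introduced do not affect the vanishing (they don't, since we only use that the integrand is \emph{supported} where the lemma's hypothesis holds, and that support is empty). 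Once this is in place the corollary is immediate, which is why it is stated as a corollary rather than a separate proposition.
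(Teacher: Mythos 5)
Your overall strategy is the paper's: unfold $\mathcal{O}_{\boldsymbol{\psi}}(F;\widetilde{\mathbf{m}}c^{\ast}w\widetilde{\mathbf{m}}^{-1})$ as a double integral over $N\times N_w$, note that a nonzero integrand forces $\widetilde{\mathbf{m}}xc^{\ast}wy\widetilde{\mathbf{m}}^{-1}$ into (essentially) the support of $F$, invoke Lemma~\ref{lm:weyl_elem} to produce some $j$ with $\vert c_j\vert<m/X^{n+2}\le 1$, and contradict $c_j\in\Z_{\neq 0}$ (a step the paper leaves implicit and you correctly make explicit). The conjugation of the variables by $\widetilde{\mathbf{m}}$ is legitimate, since the diagonal element $\widetilde{\mathbf{m}}$ normalizes both $N$ and $N_w$, and Jacobians and character twists are indeed irrelevant for a vanishing statement.

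The gap sits exactly at the step you yourself flag as the main obstacle, and your resolution of it is not valid as written. You take $\supp(F)\subseteq K(X,\tau)$ and then assert that ``absorbing the conjugation by $a_X$'' places you in the hypothesis $\widetilde{\mathbf{m}}xc^{\ast}wy\widetilde{\mathbf{m}}^{-1}\in K(X,\tau)^{\sharp}$ of Lemma~\ref{lm:weyl_elem}. This does not follow: $K(X,\tau)$ and $K(X,\tau)^{\sharp}=a_X^{-1}K(X,\tau)a_X$ are different sets (below-diagonal entries are allowed to be of size $\tau/X$ in the former but are forced to be much smaller in the latter, while above-diagonal entries may be large in the latter), and membership of a \emph{fixed} element in one does not imply membership in the other; conjugating by $a_X$ changes the element, not the statement. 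The intended hypothesis --- the one fixed at the start of Section~\ref{sec:orbit} and the one used in the actual application in Lemma~\ref{lm:Hecke_bound} --- is $\supp(F)\subseteq ZK(X,\tau)^{\sharp}$, and this is what the corollary's proof must work with. That brings in a second point your argument omits entirely: the central factor. A nonzero integrand only yields $z\,\widetilde{\mathbf{m}}xc^{\ast}wy\widetilde{\mathbf{m}}^{-1}\in K(X,\tau)^{\sharp}$ for some $z\in Z$, and before Lemma~\ref{lm:weyl_elem} can be applied one must control $z$: comparing determinants ($\det(\widetilde{\mathbf{m}}xc^{\ast}wy\widetilde{\mathbf{m}}^{-1})=\pm 1$, whereas elements of $K(X,\tau)^{\sharp}$ have determinant $1+\O(\tau/X)$) pins $z$ to $1+\O(X^{-1})$ up to sign, after which $z$ is absorbed by enlarging $\tau$, and only then does the lemma give the contradiction. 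This determinant step is precisely the content of the paper's short proof; once you add it (and replace the $K(X,\tau)$ support by $ZK(X,\tau)^{\sharp}$), the remainder of your argument is correct and coincides with the paper's.
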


\begin{proof}
    If for some $c$ we have $\widetilde{\mathbf{m}}xc^{\ast}wy\widetilde{\mathbf{m}}^{-1} \in \supp(F) \subset ZK(X,\tau)^{\sharp}$
    then there is a $z\in Z$ such that $z\widetilde{\mathbf{m}}xc^{\ast}wy\widetilde{\mathbf{m}}^{-1} \in K(X,\tau)^{\sharp}$. Checking determinants we conclude $z \asymp 1+\O\left(X^{-1}\right)$. The claim now follows from Lemma \ref{lm:weyl_elem} after possibly modifying $\tau$.
\end{proof}

\begin{remark}\label{rem:w-ast-c-bound}
The same arguments in Lemma \ref{lm:weyl_elem} imply that $\mathcal{O}_{\boldsymbol{\psi}}(F;\widetilde{\mathbf{m}}c^{\ast}w_{\ast}\widetilde{\mathbf{m}}^{-1})=0$ unless $c_1 <\frac{m}{X^n}$.
\end{remark}

Recall the definition of $W_F$ from \eqref{eq:def-orb-int-1}. Note that for $a\in A$ we have
\begin{equation}
    \mathcal{O}_{\boldsymbol{\psi}}(F;a) =W_F(a).\nonumber
\end{equation}
This is because $N_w=\{1\}$ precisely when $w=1$. We record the following trivial estimate, which turns out to be sharp in many situations.

\begin{lemma}\label{lm:trivial_identity}
Let $F$ be as in Proposition \ref{prop:geometric_bound}. We have
\begin{equation}
     W_{F}(1) \ll X^{\frac{1}{6}n(n-1)(n-2)}\Vert F\Vert_{\infty}.\nonumber
\end{equation}
\end{lemma}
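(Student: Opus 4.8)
The plan is to unfold the two integrals defining $W_F(1)$ and estimate trivially using only $\|F\|_\infty$ and the support condition $\mathrm{supp}(F)\subseteq K(X,\tau)$. By \eqref{eq:def-orb-int-1} we have
\begin{equation}
    W_F(1) = \int_N F(x)\overline{\boldsymbol{\psi}(x)}\d x,\nonumber
\end{equation}
so that $|W_F(1)| \le \|F\|_\infty \cdot \vol\{x\in N : x\in K(X,\tau)\}$. Thus everything reduces to bounding the volume of $N\cap K(X,\tau)$ with respect to the Lebesgue measure on $N\cong \R^{n(n-1)/2}$ coming from the entries $x_{ij}$, $i<j$.

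\textbf{Key step: the volume bound.} The condition $x\in K(X,\tau)$ means $\|x - \I_n\|_\infty < \tau/X$, i.e. $|x_{ij}| < \tau/X$ for every strictly-upper-triangular entry $(i,j)$. Naively this gives $\vol \ll (\tau/X)^{n(n-1)/2} \ll X^{-n(n-1)/2}$, which is far stronger than what is claimed and in fact too strong to be the right normalization — so the point is that we should only demand this for the entries that are genuinely constrained. Actually, rereading: $K(X,\tau)$ asks \emph{all} entries to be within $\tau/X$ of those of $\I_n$, hence for $x\in N$ every entry $x_{ij}$ with $i<j$ satisfies $|x_{ij}|<\tau/X$. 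Hence $\vol(N\cap K(X,\tau)) \le (2\tau/X)^{n(n-1)/2} = (2\tau)^{n(n-1)/2} X^{-n(n-1)/2}$. But the claimed bound is $X^{+n(n-1)(n-2)/6}\|F\|_\infty$, which is \emph{larger} than $\|F\|_\infty$. So the resolution must be that $W_F(1)$ as it appears in Proposition~\ref{prop:Kuznetsov} is really $\mathcal{O}_{\boldsymbol{\psi}}(F;\widetilde{\mathbf m}\,w\,\widetilde{\mathbf m}^{-1})$ type object with $w=1$ but a nontrivial conjugation, or the relevant quantity is $W_F$ evaluated after the substitution making the integral over the \emph{scaled} cell $K(X,\tau)^\sharp = a_X^{-1}K(X,\tau)a_X$; the paper applies this lemma to $F = F_X^\sharp$ supported in $K(X,\tau)^\sharp$. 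So I would instead compute $W_{F}(1)$ for $F$ supported in $K(X,\tau)^\sharp$: then $x\in N\cap K(X,\tau)^\sharp$ means $a_X x a_X^{-1} \in K(X,\tau)$, and since $(a_X x a_X^{-1})_{ij} = X^{i-j} x_{ij}$ for $i<j$, the constraint $|X^{i-j}x_{ij}| < \tau/X$ reads $|x_{ij}| < \tau X^{j-i-1}$. The volume is therefore $\ll \prod_{i<j} X^{j-i-1} = X^{\sum_{i<j}(j-i-1)}$, and one computes $\sum_{1\le i<j\le n}(j-i-1) = \sum_{k=1}^{n-1}(n-k)(k-1) = \tfrac{1}{6}n(n-1)(n-2)$. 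Combined with $|W_F(1)|\le \|F\|_\infty\cdot\vol$, this gives exactly $W_F(1)\ll X^{\frac{1}{6}n(n-1)(n-2)}\|F\|_\infty$.

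\textbf{Writing it up.} Concretely I would: (1) invoke $\mathcal{O}_{\boldsymbol{\psi}}(F;a)=W_F(a)$ for $a\in A$ (already noted in the text) to legitimize working with $W_F(1)=\int_N F(x)\overline{\boldsymbol{\psi}(x)}\,\d x$; (2) use $|\overline{\boldsymbol{\psi}(x)}|=1$ and pull out $\|F\|_\infty$; (3) identify the support constraint on the coordinates $x_{ij}$ coming from $\mathrm{supp}(F)\subseteq ZK(X,\tau)^\sharp$ — the central translate contributes only an $O(X^{-1})$-size ambiguity in the scaling which is harmless — obtaining $|x_{ij}|\ll X^{j-i-1}$ for each $1\le i<j\le n$; (4) multiply these out and evaluate the exponent sum $\sum_{1\le i<j\le n}(j-i-1)=\tfrac16 n(n-1)(n-2)$ via the substitution $k=j-i$. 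The main (and really only) obstacle is bookkeeping: making sure the scaling exponents $X^{i-j}$ attached to $a_X=\diag(X^{1-n},\dots,X^{-1},1)$ are taken in the correct direction so that the off-diagonal entries of $a_X^{-1}Na_X$ are \emph{inflated} rather than shrunk, and confirming that the $Z$-translation in the support hypothesis does not disturb the count (it changes each $x_{ij}$-window by a bounded multiplicative factor since $z\asymp 1+O(X^{-1})$, as in the proof of Corollary~\ref{cor:support_geo}). Everything else is a one-line volume estimate.
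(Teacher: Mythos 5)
Your argument is exactly the paper's: bound $|W_F(1)|\le \|F\|_\infty\cdot\vol(\supp(F)\cap N)$ and then use $\vol\bigl(K(X,\tau)^{\sharp}\cap N\bigr)\ll X^{\frac16 n(n-1)(n-2)}$, which you verify by the entrywise scaling $(a_Xxa_X^{-1})_{ij}=X^{i-j}x_{ij}$ and the exponent sum $\sum_{i<j}(j-i-1)=\frac16 n(n-1)(n-2)$ (the paper asserts this volume bound without detail). You also correctly resolve the support convention — the relevant support is $ZK(X,\tau)^{\sharp}$, not $K(X,\tau)$ as literally stated in Proposition~\ref{prop:geometric_bound} — and your remark that the central factor $z\asymp 1+\O(X^{-1})$ is harmless matches the treatment in Corollary~\ref{cor:support_geo}, so the proof is correct.
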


\begin{proof}
Clearly, we have
\begin{equation}
    \vert W_F(1)\vert\, \leq \vol(\supp(F)\cap N)\Vert F\Vert_{\infty}. \nonumber
\end{equation}
We are done after noting that $\vol(K(X,\tau)^{\sharp}\cap N) \ll X^{\frac{1}{6}n(n-1)(n-2)}$.
\end{proof}

We now turn towards the slightly exceptional element $w_{\ast}$. First, recall that by \cite[Theorem~4.4]{assing2024principal} we have $S_{w_{\ast}}(\mathbf{m},\mathbf{n};c)=0$ unless\footnote{Technically speaking this is up to signs, but this is irrelevant for our discussion.}
\begin{equation}
    c=(r,rs,\ldots, rs^{n-2}) \text{ or }c=(rs^{n-2},\ldots,rs,r), \label{eq:admis_c}
\end{equation}
where $\mathbf{m}$ and $\mathbf{n}$ are of the shape $(\ast,1,\ldots,1,\ast)$. Arguing as in the proof \cite[Lemma~3.4]{assing2024density} allows us to establish the following.

\begin{lemma}\label{lm:orb_int_est}
Let $n,F$ be as in Proposition \ref{prop:geometric_bound} and $\mathbf{m}$ be as in Lemma \ref{lm:weyl_elem}. We have
\begin{equation*}
    \mathcal{O}_{\boldsymbol{\psi}}(F,\widetilde{\mathbf{m}}c^{\ast}w_{\ast}\widetilde{\mathbf{m}}^{-1}) \ll_\epsilon \vert m\vert^{n-1+\epsilon}\frac{X^{\frac{1}{6}n(n-1)(n-2)+\epsilon}}{X^{n-1}} \cdot \vert c_1\cdots c_{n-1}\vert^{-1}\Vert F\Vert_{\infty}.
\end{equation*}
Furthermore, the left-hand side vanishes unless $s\ll \max(\frac{m}{rX^{n+1}},1)$.
\end{lemma}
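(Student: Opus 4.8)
The plan is to estimate $\mathcal{O}_{\boldsymbol{\psi}}(F,\widetilde{\mathbf{m}}c^{\ast}w_{\ast}\widetilde{\mathbf{m}}^{-1})$ directly from the definition \eqref{eq:def-orb-int-gen} by bounding everything trivially, using only $\supp(F)$ and $\Vert F\Vert_\infty$. First I would unfold the double integral: writing $g_0 := \widetilde{\mathbf{m}}c^{\ast}w_{\ast}\widetilde{\mathbf{m}}^{-1}$, we have
\begin{equation}
    \mathcal{O}_{\boldsymbol{\psi}}(F;g_0) = \int_{N_{w_{\ast}}}\int_N F(x g_0 y)\overline{\boldsymbol{\psi}(x)}\overline{\boldsymbol{\psi}(y)}\d x\d y,\nonumber
\end{equation}
so that trivially $|\mathcal{O}_{\boldsymbol{\psi}}(F;g_0)| \leq \Vert F\Vert_\infty \cdot \vol\{(x,y)\in N\times N_{w_\ast} : x g_0 y\in \supp(F)\}$. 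The task is then to bound this volume. The point is that $g_0 = \widetilde{\mathbf{m}}c^{\ast}w_{\ast}\widetilde{\mathbf{m}}^{-1}$ is an explicit matrix whose entries (after the conjugation by $\widetilde{\mathbf{m}}$ and using the special shape of $w_\ast$ and the admissible $c$ from \eqref{eq:admis_c}) involve the quantities $c_1,\dots,c_{n-1}$ and $m$; the constraint $xg_0y\in K(X,\tau)^{\sharp}$ (up to the central twist, as in the proof of Corollary~\ref{cor:support_geo}) forces $x$ and $y$ to lie in small boxes whose dimensions scale like negative powers of $X$, with the scaling governed precisely by $\delta_{w_\ast}$, the $c_j$'s, and $m$.

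Concretely, I would follow the computation in \cite[Lemma~4.2]{assing2024principal} and \cite[Lemma~3.4]{assing2024density}: conjugating the membership $xg_0y\in K(X,\tau)^{\sharp}$ by $a_X$ turns it into a statement about $\widetilde{\mathbf{m}}xc^{\ast}w_\ast y\widetilde{\mathbf{m}}^{-1}$ lying near $\I_n$ in the $a_X$-twisted sense, and reading off the matrix entries of the product cell-by-cell pins down each coordinate of $x$ (resp. $y$) to an interval of length $\ll X^{-(\text{something})}$. Multiplying these interval lengths, one finds the volume of the $x$-integration is $\ll X^{\frac{1}{6}n(n-1)(n-2)+\epsilon}/X^{n-1}$ — this is the $W_F$-type factor, where the extra $X^{-(n-1)}$ relative to Lemma~\ref{lm:trivial_identity} comes from the $\overline{\boldsymbol{\psi}}$-oscillation in the top-right entry combined with $\delta_{w_\ast}(\widetilde{\mathbf{m}}) = m^{n-1}$ as in \eqref{eq:value-delta}; and the $y$-integration over $N_{w_\ast}$ (which is $(2n-3)$-dimensional) contributes a factor that, after keeping track of how the $c_j$'s appear as denominators of the relevant matrix entries, is $\ll |m|^{n-1+\epsilon}|c_1\cdots c_{n-1}|^{-1}$. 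Along the way, the requirement that these boxes be nonempty — i.e.\ that the forced intervals actually have positive length, or that the matrix entries of $g_0$ that must be $\O(\tau/X)$ really are that small — yields the vanishing unless $s\ll\max(m/(rX^{n+1}),1)$; this is exactly analogous to the constraint derived in Lemma~\ref{lm:weyl_elem} and Remark~\ref{rem:w-ast-c-bound}, applied now to the full modulus $c=(r,rs,\dots,rs^{n-2})$ (or its reverse) rather than just $c_1$.

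The main obstacle will be the bookkeeping in the volume computation: one must carefully track, entry by entry, which coordinates of $x\in N$ and $y\in N_{w_\ast}$ are constrained by the condition $xg_0y\in K(X,\tau)^{\sharp}$ and with what $X$-power, since the conjugations by both $\widetilde{\mathbf{m}}$ and (implicitly, in $K(X,\tau)^{\sharp}$) by $a_X$ introduce nonuniform scalings across the matrix. Organizing this so that the product of interval lengths collapses to the clean expression $|m|^{n-1+\epsilon}X^{\frac{1}{6}n(n-1)(n-2)+\epsilon}X^{-(n-1)}|c_1\cdots c_{n-1}|^{-1}$, and simultaneously extracting the support condition on $s$, is the delicate part — but it is structurally identical to \cite[Lemma~3.4]{assing2024density}, so the plan is to mirror that argument, substituting $X$ for the relevant level parameter and $\delta_{w_\ast}(\widetilde{\mathbf{m}})=m^{n-1}$ for the non-archimedean normalization. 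No new analytic input is needed; the estimate is purely combinatorial-geometric once the matrix identity for $xg_0y$ is written out.
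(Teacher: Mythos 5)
Your high-level plan -- bound $\mathcal{O}_{\boldsymbol{\psi}}$ trivially from \eqref{eq:def-orb-int-gen} using only $\supp(F)$ and $\Vert F\Vert_\infty$, mirroring the non-archimedean computation of \cite[Lemma~3.4]{assing2024density} -- is the paper's, but the mechanism you describe for the volume bound has a genuine gap. The set $\{(x,y)\in N\times N_{w_\ast}: xg_0y\in\supp(F)\}$ is \emph{not} a product of short intervals, and reading off the entries of $xg_0y$ cell-by-cell does not pin the coordinates individually: to decouple the $N$-variable from the $N_{w_\ast}$-variable the paper first produces an explicit $N\,A\,N^{\top}$ factorization of $a_Xw_\ast Ya_X^{-1}$ (the matrices $b$ and $M$ in the proof) and then invokes an archimedean Iwahori-type decomposition ($ut\overline{u}\in K(X,\tau)\Rightarrow u,t,\overline{u}\in K(X,\tau')$). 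After this step the $N$-integral contributes exactly the Lemma~\ref{lm:trivial_identity}-type factor $X^{\frac16 n(n-1)(n-2)}$ with no extra saving, while \emph{all} of the factors $X^{-(n-1)}$, $|m|^{n-1}$ and $|c_1\cdots c_{n-1}|^{-1}=r^{-(n-1)}s^{-\frac12(n-1)(n-2)}$ come from the $N_{w_\ast}$-side: the measure $|y|^{n-2}\d\mathbf{x}\d y\d\mathbf{z}$ together with the constraint $|yr/m-1|\le\tau'/X$ gives $|m/r|^{n-1}X^{-1}$ (so the $m$-power comes from the $y$-integral, not from $\delta_{w_\ast}(\widetilde{\mathbf{m}})$, which enters only through the Kuznetsov formula in Proposition~\ref{prop:geometric_bound}); the $n-2$ diagonal constraints give intervals of length $\asymp X^{-1}$; and the substitution $z'_{i+1}/z'_i=su_i$ has Jacobian $|s|^{-\frac12(n-1)(n-2)}$. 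In particular your attribution of the $X^{-(n-1)}$ saving to $\overline{\boldsymbol{\psi}}$-oscillation is wrong: no cancellation is used anywhere in this lemma -- the character is bounded trivially, exactly as advertised before Proposition~\ref{prop:geometric_bound}.

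More seriously, after the necessary rescalings the remaining $N_{w_\ast}$-coordinates $\mathbf{x}\in\R^{n-2}$ (in the paper's parametrization) run over intervals of length $\O(1)$, and the change of variables that linearizes the diagonal constraints produces the singular density $|x_1\cdots x_{n-2}|^{-1}$, so ``multiplying interval lengths'' does not even converge naively. The real content of the proof is the coupled system of constraints (for $s=1$: $u_i'\ll\min\bigl(\tfrac{m}{rX^{n+1}},\tfrac{1}{|x_1|X},\ldots,\tfrac{1}{|x_i|X}\bigr)$; for $s\neq1$: $1/|x_i|\ll\tfrac{m}{rsX^{n+1}}$) together with the decomposition of the $\mathbf{x}$-range into the regions $\mathcal{I}_j$, which is what yields convergence with only $(rmX)^{\epsilon}$ losses. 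This is also precisely where the vanishing condition $s\ll\max(m/(rX^{n+1}),1)$ comes from: the factors $(1-u_is)$ in the first column of the resulting lower-triangular matrix, played against the bottom-row bound $|x_i|\ll1$, force $s\ll m/(rX^{n+1})$ when $s\neq1$, whereas for $s=1$ these factors vanish, which is why $s=1$ always survives and needs the separate, more delicate treatment. Your proposal attributes this condition to smallness of entries of $g_0$ itself (which would wrongly exclude $s=1$) and defers the singular-measure bookkeeping to ``structurally identical'' citation; as written the argument would not close without supplying exactly these steps.
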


\begin{proof}
We start with $c=(r,rs,\cdots,rs^{n-2})$. We have
\begin{equation}
    t_c := \widetilde{\mathbf{m}}c^{\ast}w_{\ast}\widetilde{\mathbf{m}}^{-1}w_{\ast}^{-1} = \left(\begin{matrix} \frac{m}{rs^{n-2}}  & & \\ & s  \I_{n-2} & \\ & & \frac{r}{m}\end{matrix}\right).\nonumber
\end{equation}
Write $a_X = \diag(X^{1-n} , \widetilde{a}_X,1)$ and recall that almost every $Y\in N_{\omega_{\ast}}$ can be written in the form
\begin{equation*}
    Y := \left(\begin{matrix} 1&  y\mathbf{x} \widetilde{a}_X & y \\ & \I_{n-2} & \widetilde{a}_X^{-1}\mathbf{z} \\ & & 1 \end{matrix}\right) \text{ for } \mathbf{x},\mathbf{z}^{\top}\in \R^{n-2} \text{ and }y\in \R^{\times}.
\end{equation*}
As we will be integrating over $N_{w_\ast}$ we will assume that $y\neq 0$. The measure on $N_{w_{\ast}}$ in these coordinates is given by $\d Y = \vert y\vert^{n-2} \d\mathbf{x}\d y\d\mathbf{z}$. Accordingly, we write
\begin{equation}
    r(\mathbf{x},y,\mathbf{z}) := a_Xw_{\ast}Ya_X^{-1} = \left(\begin{matrix} 0&0&-X^{1-n} \\ 0& \I_{n-2} & \mathbf{z} \\ X^{n-1} & y\mathbf{x} & y\end{matrix}\right). \nonumber
\end{equation}
By the definition of the orbital integral \eqref{eq:def-orb-int-gen} and after a change of variables in the $N$-integral we obtain
\begin{multline}
    \mathcal{O}_{\boldsymbol{\psi}}(F,\widetilde{\mathbf{m}}c^{\ast}w_{\ast}\widetilde{\mathbf{m}}^{-1}) \\ = X^{\frac{1}{6}n(n-1)(n-2)+\frac{1}{2}n(n-1)}\int_N \int_{N_{w_{\ast}}} \widetilde{F}(ut_cr(\mathbf{x},y,\mathbf{z}))\overline{\boldsymbol{\psi}_X(u)}e\left(-yx_1X^{2-n}-Xz_{n-1}\right)\d Y\d u, \nonumber
\end{multline}
where $\widetilde{F}(g) := F(a_X^{-1}ga_X).$ In particular, $\widetilde{F}$ is supported in $ZK(X,\tau)$. We now find a matrix $b\in B$ and $M\in B\cap \GL_{n-2}(\R)$ implicitly dependnent on $\mathbf{x},y,\mathbf{z}$ so that
\begin{equation}
    b\cdot r(\mathbf{x},y,\mathbf{z}) = \left(\begin{matrix} 1 & 0 & 0\\ -\frac{1}{yX^{1-n}} M\mathbf{z} & M(\I_{n-2}-\mathbf{z}\cdot \mathbf{x}) & 0\\ \frac{1}{X^{1-n}y} &\mathbf{x} & 1\end{matrix}\right) =: \overline{r}(\mathbf{x},y,\mathbf{z})\in N^\top.\nonumber
\end{equation}
Here the matrices $b$ and $M$ can be given explicitly as follows:
\begin{equation}
    b = \left(\begin{matrix} y(1-\mathbf{x}\cdot\mathbf{z}) & -X^{1-n}y\mathbf{x} & X^{1-n} \\ 0 & M & -\frac{1}{y} M\mathbf{z} \\ 0&0& \frac{1}{y} \end{matrix}\right),\quad M=\left(\begin{matrix} \frac{d_{n-2}}{d_{n-1}} & \frac{x_2z_1}{d_{n-1}} & \cdots & \frac{x_{n-3}z_1}{d_{n-1}} & \frac{x_{n-2}z_1}{d_{n-1}}\\  & \frac{d_{n-3}}{d_{n-2}} &  \ddots & \ddots & \frac{x_{n-2}z_2}{d_{n-2}} \\  &  & \ddots & \ddots & \vdots \\  &  &  & \frac{d_2}{d_3} & \frac{x_{n-2}z_{n-3}}{d_3} \\ & & & &\frac{1}{d_2}  \end{matrix}\right) \nonumber
\end{equation}
with $d_i=1-\sum_{j=n-i}^{n-2}x_jz_j$.
We write $b=t(\mathbf{x},y,\mathbf{z})^{-1}u(\mathbf{x},y,\mathbf{z})^{-1}\in A\times N$ and do a change of variable $ut_cu(\mathbf{x},y,\mathbf{z})t_c^{-1}\mapsto u$. Now we trivially estimate
\begin{multline*}
    \mathcal{O}_{\boldsymbol{\psi}}(F,\widetilde{\mathbf{m}}c^{\ast}w_{\ast}\widetilde{\mathbf{m}}^{-1})  \ll \Vert F\Vert_{\infty} X^{\frac{1}{6}n(n-1)(n-2)+\frac{1}{2}n(n-1)}\\
    \times\int_{N} \int_{N_{w_{\ast}}} \mathbf{1}_{K(X,\tau)}\left(ut_ct(\mathbf{x},y,\mathbf{z})\overline{r}(\mathbf{x},y,\mathbf{z})\right)\d Y\d u.
\end{multline*}
At this point we note that, for any $u\in N$, $t\in A$ and $\overline{u}\in N^\top$, the containment $ut\overline{u}\in K(X,\tau)$ implies $u,t,\overline{u}\in K(X,\tau')$ for a suitable $\tau' = \O(\tau)$. This is the archimedean analogue of the Iwahori decomposition and can for example be seen by using a suitable Lie-algebra decomposition. This allows us to estimate the $u$-integral by $\textrm{vol}(K(X,\tau)\cap N) \asymp X^{-\frac{1}{2}n(n-1)}$. We also  write $A(X,\tau):=K(X,\tau)\cap A$ and $N^\top(X,\tau) := K(X,\tau)\cap N^\top$. We get
\begin{multline}
    \mathcal{O}_{\boldsymbol{\psi}}(F,\widetilde{\mathbf{m}}c^{\ast}w_{\ast}\widetilde{\mathbf{m}}^{-1}) \ll \Vert F\Vert_{\infty} X^{\frac{1}{6}n(n-1)(n-2)}\\
    \times\int_{\R^{n-2}}\int_{\R}\int_{\R^{n-2}}  \mathbf{1}_{A(X,\tau')}(t_ct(\mathbf{x},y,\mathbf{z}))\mathbf{1}_{N^\top(X,\tau')}(\overline{r}(\mathbf{x},y,\mathbf{z})))\vert y\vert^{n-2}\d\mathbf{x}\d y\d\mathbf{z}. \nonumber
\end{multline}
Now we change $\mathbf{z}\mapsto\mathbf{z}'$ with $z'_{n-2}=1-z_{n-2}x_{n-2}$ and $z_i'=z_{i+1}'-x_iz_i$. This achieves that $d_i=z_{n-i}'$. We obtain
\begin{multline}
    \mathcal{O}_{\boldsymbol{\psi}}(F,\widetilde{\mathbf{m}}c^{\ast}w_{\ast}\widetilde{\mathbf{m}}^{-1}) \ll \Vert F\Vert_{\infty} X^{\frac{1}{6}n(n-1)(n-2)}\\
    \times\int_{\R^{n-2}}\int_{\R}\int_{\R^{n-2}}  \mathbf{1}_{A(X,\tau')}(t_ct(\mathbf{x},y,\mathbf{z}'))\mathbf{1}_{N^\top(X,\tau')}(\overline{r}(\mathbf{x},y,\mathbf{z}')))\vert y\vert^{n-2}\frac{\d\mathbf{x}\d y\d\mathbf{z'}}{\vert x_1\cdots x_{n-2}\vert}. \nonumber
\end{multline}
In particular, we compute that
\begin{equation}
    t(\mathbf{x},y,\mathbf{z}') = \diag\left(\frac{1}{yz_1'},\frac{z_1'}{z_2'},\ldots,z_{n-2}',y\right).\nonumber 
\end{equation}
This gives the conditions
\begin{equation}
    \left|\frac{yr}{m}-1\right|\le\frac{\tau'}{X},\quad \text{ and }\quad  \left|\frac{z_i'}{z'_{i+1}}s-1\right|\le \frac{\tau'}{X}\nonumber
\end{equation}
for $i=1,\ldots, n-2$ with the convention that $z_{n-1}'=1$.

We now check the support of the $y$-integral. Recall that the bottom left entry of $\overline{r}(\mathbf{x},y,\mathbf{z})$ is $\frac{X^{n-1}}{y}$, so we obtain
\begin{equation}
    \frac{X^{n-1}}{|y|} \le \frac{\tau'}{X}.\nonumber
\end{equation}
We record that combining the last two conditions on $y$ above we obtain the support constraint $r < \frac{m}{X^n}$ (which is not new to us; see Remark \ref{rem:w-ast-c-bound}).

We now concentrate on the $\mathbf{z}'$ and $\mathbf{x}$-integrals. Let $I$ be an interval so that $\frac{z_{i+1}'}{z_i's}\in I$. From the last estimates on the coordinates of $\mathbf{z}'$, we can surely assume that $I\subseteq [1-\tau''/X,1+\tau''/X]$ for some new $\tau''= \O(\tau)$. We do a recursive change of variables from $\mathbf{z}'\mapsto\mathbf{u}$ achieving $\frac{z_{i+1}'}{z_i'} = su_{i}$ for $i=1,\ldots, n-2$ and $\mathbf{x}\mapsto \frac{\mathbf{x}}{X}$. The Jacobian of this change of variable is
$\vert s\vert^{-\frac{1}{2}(n-1)(n-2)}\prod_{j=1}^{n-2} |u_j|^{\alpha_j}$ for some $\alpha_j\in\Z$. Noting that $u_j\asymp 1$ we obtain
\begin{multline}
    \mathcal{O}_{\boldsymbol{\psi}}(F,\widetilde{\mathbf{m}}c^{\ast}w_{\ast}\widetilde{\mathbf{m}}^{-1}) \ll \Vert F\Vert_{\infty} \vert s\vert^{-\frac{1}{2}(n-1)(n-2)}X^{\frac{1}{6}n(n-1)(n-2)}\\
    \times\int_{\R} \mathbf{1}_{[-\tau'/X,\tau'/X]}\left(\frac{yr}{m}-1\right)\vert y\vert^{n-2}\int_{I^{n-2}}\int_{\R^{n-2}} \mathbf{1}_{N^\top(X,\tau')}(\overline{n}(\mathbf{x},y,\mathbf{u}))\frac{\d\mathbf{x}\d\mathbf{u}}{\vert x_1\cdots x_{n-2}\vert}\d y. \nonumber
\end{multline}
with
\begin{equation}
    \overline{n}(\mathbf{x},y,\mathbf{u}) := 
    \left(\begin{matrix} 1 &  &  & &  &  \\
    \frac{X^n}{y x_1}(1-u_1s) & 1 &  &  &  &  \\
    \frac{X^n}{y x_2}(1-u_2s) & \frac{x_1}{x_2}(1-u_2s) & 1 &  &  &  \\
    \vdots & \ddots & \ddots & \ddots &  &  \\
    \vdots & \ddots & \ddots & 1 & & \\
    \frac{X^n}{y x_{n-2}}(1-u_{n-2}s) & \frac{x_1}{x_{n-2}}(1-u_{n-2}s) & \cdots & \frac{x_{n-3}}{x_{n-2}}(1-u_{n-2}s) & 1 & \\ \frac{X^{n-1}}{y } & \frac{x_1}{X} &\cdots  & \frac{x_{n-3}}{X} & \frac{x_{n-2}}{X} &  1 
    \end{matrix}\right).\nonumber
\end{equation}
The bottom row restricts $\mathbf{x} \in [-\tau',\tau']^{n-2}$. Now we make sure that the $\mathbf{x}$-integral converges. 

If $s\neq 1$, then the first column gives the bound
\begin{equation*}
    \frac{1}{x_i} \ll \frac{y}{s X^{n+1}}\asymp\frac{m}{rsX^{n+1}}.
\end{equation*}
Combining this with the support of the $x_i$-integrals obtained from the bottom row we conclude that
\begin{equation}
    \frac{rs}{m}X^{n+1} \ll x_i \ll 1. \nonumber
\end{equation}
From this we extract the required upper bound on $s$.

At this end, we note that the contribution of the $y$-integral is
\begin{equation}
    \int_{\R} \mathbf{1}_{[-\tau'/X,\tau'/X]}\left(\frac{yr}{m}-1\right)\vert y\vert^{n-2}\d y \ll_\tau \frac{1}{X}\left\vert \frac{m}{r}\right\vert^{n-1}. \label{eq:y_est}
\end{equation}
Estimating the remaining integrals trivially and noting that $\vol(I)\asymp X^{-1}$ we obtain the desired bound in this case.

Finally, we consider $s=1$. We change variables $\mathbf{u}\mapsto\mathbf{u'}$ with $u_i =  1+x_iu_i'$. Note that $u_i'\in [-\frac{\tau''}{|x_i|X},\frac{\tau''}{x_iX}]$. Thus after defining
\begin{equation}
    \mathcal{I}(\mathbf{x}) := \prod_{i=1}^{n-2} \left[-\frac{\tau''}{|x_i|X},\frac{\tau''}{|x_i|X}\right] \subseteq \R^{n-2},\nonumber
\end{equation}
we arrive at
\begin{multline}
    \mathcal{O}_{\boldsymbol{\psi}}(F,\widetilde{\mathbf{m}}c^{\ast}w_{\ast}\widetilde{\mathbf{m}}^{-1}) \ll \Vert F\Vert_{\infty}  X^{\frac{1}{6}n(n-1)(n-2)}\int_{\R} \mathbf{1}_{[-\tau'/X,\tau'/X]}\left(\frac{yr}{m}-1\right)\vert y\vert^{n-2}\\
    \int_{[-\tau',\tau']^{n-2}} \int_{\mathcal{I}(\mathbf{x})}\mathbf{1}_{N^\top(X,\tau')}(\overline{n}(\mathbf{x},y,\mathbf{u}'))\d\mathbf{u}'\d\mathbf{x}\d y. \nonumber
\end{multline}
with
\begin{equation}
    \overline{n}(\mathbf{x},y,\mathbf{u}') = 
    \left(\begin{matrix} 1 &  &  &  &  &  \\
    \frac{X^n}{y}u_1' & 1 &  &  &  &   \\
    \frac{X^n}{y}u_2' & x_1u_2' & 1 &  &  &   \\
    \vdots & \ddots & \ddots & &  &   \\
    \vdots & \ddots & \ddots & 1 & & \\
    \frac{X^n}{y}u_{n-2}' & x_1u_{n-2}' & \cdots & x_{n-3}u_{n-2}' & 1 & \\ \frac{X^{n-1}}{y} & \frac{x_1}{X} &\cdots  & \frac{x_{n-3}}{X} & \frac{x_{n-2}}{X} &  1 
    \end{matrix}\right).\nonumber
\end{equation}
First, recall that $y= \frac{m}{r}\left(1+\O(X^{-1})\right)$. Thus the first column of the matrix $\overline{n}(\mathbf{x},y,\mathbf{u}')$ gives the condition
\begin{equation}
    u_i' \ll \frac{m}{r X^{n+1}} \text{ for } i=1,\ldots, n-2.\nonumber
\end{equation}
On the other hand, rows $3$ to $n-1$ together with $\mathbf{u}'\in \mathcal{I}(\mathbf{x})$ give the conditions
\begin{equation}
    u_i' \ll \frac{1}{X \max(|x_1|,\ldots,|x_{i}|)} \text{ for }i=1,\ldots, n-2.\nonumber
\end{equation}
At this point we can execute the $\mathbf{u}'$-integral and the $y$-integral. We obtain
\begin{equation}
    \mathcal{O}_{\boldsymbol{\psi}}(F,\widetilde{\mathbf{m}}c^{\ast}w_{\ast}\widetilde{\mathbf{m}}^{-1}) \ll \Vert F\Vert_{\infty} \left\vert \frac{m}{r}\right\vert^{n-1}
    X^{\frac{1}{6}n(n-1)(n-2)-1}
    \int_{[-\tau',\tau']^{n-2}} \prod_{i=1}^{n-2}\vol(B_i(\mathbf{x}))  \d\mathbf{x}, \nonumber
\end{equation}
where (after possibly modifying the implied constants)
\begin{equation}
    B_i(\mathbf{x}) := \left\{ u_i'\in \R \,:\, |u_i'|\,\le \min\left(\frac{m}{rX^{n+1}},\frac{1}{|x_1|X},\ldots,\frac{1}{|x_{i}|X}\right)\right\}. \nonumber
\end{equation}
The remaining task is to estimate the $\mathbf{x}$-integral above. We majorize this integral by $\sum_{j=0}^{n-2}\mathcal{I}_j$, where $\mathcal{I}_j$ is the integral over the range of $\mathbf{x}$ with
\begin{equation*}
    B_1(\mathbf{x}) = \ldots = B_j(\mathbf{x}) = \left[-\frac{m}{rX^{n+1}},\frac{m}{rX^{n+1}}\right]\supset \left[-\frac{1}{|x_j|X},\frac{1}{|x_j|X}\right] = B_{j+1}(\mathbf{x}).
\end{equation*}
In other words, the integral $\mathcal{I}_j$ is over all $\mathbf{x}$ such that
\begin{equation}\label{eq:condition}
    |x_i|\,\le \frac{rX^n}{m}\text{ for }i=1,\ldots, j;\quad\text{and}\quad |x_{j+1}|\,\ge \frac{rX^n}{m}.
\end{equation}
Note that \eqref{eq:condition} allows us to estimate the integral $\mathcal{I}_j$ as follows:
\begin{equation*}
    \mathcal{I}_j \ll  X^{-(n-2)} \intop_{\frac{rX^n}{m}\ll x_{j+1}\ll 1}\intop_{x_{j+2}\ll 1}\dots \intop_{x_{n-2}\ll 1} \\\prod_{i=j+1}^{n-2}\max\left(|x_{j+1}|,\ldots,|x_{i}|\right)^{-1} \d x_{j+1} \dots \d x_{n-2} .\nonumber
\end{equation*}
Starting from the $x_{n-2}$-integral and working backwards we now estimate that that the remaining integral is $\O_\epsilon((rmX)^{\epsilon})$. This can be checked by considering the two cases $|x_{l}|<\max\left(|x_{j+1}|,\ldots, |x_{l-1}|\right)$ and $|x_{l}|>\max\left(|x_{j+1}|,\ldots, |x_{l-1}|\right)$ separately.

The case where $c=(rs^{n-2}, \ldots, rs,r)$ can be reduced to the case above by using a suitable involution as in the proof of Lemma~\ref{lm:weyl_elem}.
\end{proof}

\begin{proof}[Proof of Proposition \ref{prop:geometric_bound}]
Note that our assumptions are tailored for an application of Corollary~\ref{cor:support_geo}, so that using \eqref{eq:value-delta} and recalling Remark \ref{rem:w-ast-c-bound} we get
\begin{multline*}
    \sum_{w\in W}\sum_{\substack{0\neq c\in \Z^{n-1} \\ \text{admissible}}} \frac{S_w(\mathbf{m},\mathbf{m};c)}{\delta_w(\widetilde{\mathbf{m}})} \mathcal{O}_{\boldsymbol{\psi}}(F,\widetilde{\mathbf{m}}c^{\ast}w\widetilde{\mathbf{m}}^{-1}) \\= W_F(1) + \sum_{\substack{c\in \Z_{\neq 0}^{n-1} \\ 
    c_1<\frac{m}{X^n} 
    \\ \mathrm{admissible}}}\frac{S_{w_{\ast}}(\mathbf{m},\mathbf{m};c)}{m^{n-1}}\mathcal{O}_{\boldsymbol{\psi}}(F;\widetilde{\mathbf{m}}c^{\ast}w\widetilde{\mathbf{m}}^{-1}).
\end{multline*}
In view of Lemma~\ref{lm:trivial_identity} it is sufficient to estimate
\begin{equation}
    S:=\sum_{\substack{c\in \Z_{\neq 0}^{n-1} \\ c_1<\frac{m}{X^n} \\ \text{admissible}}}\frac{S_{w_{\ast}}(\mathbf{m},\mathbf{m};c)}{m^{n-1}}\mathcal{O}_{\boldsymbol{\psi}}(F;\widetilde{\mathbf{m}}c^{\ast}w_{\ast}\widetilde{\mathbf{m}}^{-1}).
\end{equation}
We estimate the orbital integral using Lemma~\ref{lm:orb_int_est}. Writing $c(r,s)$ for the admissible moduli, as in \eqref{eq:admis_c}, we obtain
\begin{equation}
    S \ll \Vert F\Vert_{\infty} X^{\frac{1}{6}n(n-1)(n-2)+\epsilon}X^{1-n}\sum_{r\ll \frac{m}{X^n}} \sum_{s\ll \max(\frac{m}{rX^{n+1}},1)} \frac{\vert S_{w_{\ast}}(\mathbf{m},\mathbf{m};c(r,s))\vert}{\vert r^{n-1}s^{\frac{1}{2}(n-1)(n-2)}\vert}.\nonumber
\end{equation}
Estimating the Kloosterman sum trivially, as in \eqref{eq:triv-bound-klooster}, the result follows directly.
\end{proof}

\section{Proof of the Main Theorems}\label{sec:proof}

We have now gathered everything to derive the main result. We first combine the spectral and the geometric estimate to get the following key lemma, which essentially entails \eqref{eq:Hecke_Version}. We fix $n\ge 4$ and a prime $p$. Also, we assume that $X>1$ is tending to infinity and $\tau>0$ sufficiently small but fixed, throughout the section.

\begin{lemma}\label{lm:Hecke_bound}
Let $\mathbf{m}:=(m,1\dots,1)$ such that $m\ll X^{n+2}$. Then we have
\begin{equation}
    \int_{\substack{\widehat{\mathbb{X}}_{\gen}\\ \Vert \mu_{\infty}(\pi)\Vert \ll X}} \vert \lambda_{\pi}(\mathbf{m})\vert^2\d\mu_{\aut}(\pi) \ll_\epsilon X^{\frac{1}{2}(n+2)(n-1)+\epsilon}.\nonumber
\end{equation}
\end{lemma}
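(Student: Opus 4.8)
The plan is to combine the spectral lower bound from Proposition~\ref{prop:spectral_lower_bound} with the geometric upper bound from Proposition~\ref{prop:geometric_bound} via the Kuznetsov formula of Proposition~\ref{prop:Kuznetsov}, applied to the test function $F = F_X^\sharp$ with $\mathbf{m} = (m,1,\ldots,1)$. First I would take $F = F_X^\sharp$, which by \eqref{eq:support-F} and \eqref{eq:def_FX} is supported in $ZK(X^{1-\epsilon},\tau)^\sharp$ (after pushing forward under $a_X$-conjugation); note the range $m \ll X^{n+2}$ matches the hypothesis $m \le X^{n+2}$ of Proposition~\ref{prop:geometric_bound} up to adjusting $\epsilon$, and one should take care that it is really $F_X$ supported in $K(X^{1-\epsilon},\tau)$ that feeds Proposition~\ref{prop:geometric_bound}, after undoing the $\sharp$-twist. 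Inserting $F_X^\sharp$ into Proposition~\ref{prop:Kuznetsov} gives
\begin{equation}
    \int_{\widehat{\mathbb{X}}_{\gen}} \vert \lambda_\pi(\mathbf{m})\vert^2 J_{\pi,\boldsymbol{\psi}}(\overline{F_X^\sharp}) \frac{\d\mu_{\aut}(\pi)}{\ell(\pi)} = \mathop{\sum\sum}_{w,c \text{ admissible}} \frac{S_w(\mathbf{m},\mathbf{m};c)}{\delta_w(\widetilde{\mathbf{m}})}\mathcal{O}_{\boldsymbol{\psi}}(F_X^\sharp, \widetilde{\mathbf{m}}c^\ast w\widetilde{\mathbf{m}}^{-1}).\nonumber
\end{equation}
(One may arrange $F_X = f_X \ast f_X^\vee$ to be real and symmetric so $\overline{F_X^\sharp} = F_X^\sharp$, or simply carry the conjugate along harmlessly.)

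Next I would bound the geometric side from above. By Proposition~\ref{prop:geometric_bound} applied with $X^{1-\epsilon}$ in place of $X$, together with the bound $\Vert F_X\Vert_\infty \ll X^{n^2}$ recorded before Proposition~\ref{prop:spectral_lower_bound}, the right-hand side is $\ll_\epsilon X^{n^2} \cdot X^{\frac{1}{6}n(n-1)(n-2)+\epsilon}$; here I should double-check that the $\sharp$-twist does not affect the supremum norm (it is just a conjugation, so $\Vert F_X^\sharp\Vert_\infty = \Vert F_X\Vert_\infty$) nor the applicability of the orbital-integral analysis of Section~\ref{sec:orbit}, which is phrased exactly for test functions supported in $K(X,\tau)^\sharp$. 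This produces an upper bound of the shape $X^{n^2 + \frac{1}{6}n(n-1)(n-2) + \epsilon}$ for the spectral integral weighted by $J_{\pi,\boldsymbol{\psi}}(F_X^\sharp)/\ell(\pi)$.

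For the lower bound on the spectral side, I would restrict the integral to the subfamily $\Vert\mu_\infty(\pi)\Vert \le X^{1-\epsilon}$ (a positive-measure piece of $\Vert\mu_\infty(\pi)\Vert \ll X$, again after adjusting $\epsilon$), where Proposition~\ref{prop:spectral_lower_bound} gives $J_{\pi,\boldsymbol{\psi}}(F_X^\sharp) \gg_\epsilon X^{\frac{1}{6}n(n-1)(n-2) + \frac{1}{2}n(n-1) - \epsilon}$ — using that all $\pi \in \widehat{\mathbb{X}}_{\gen}$ with trivial central character are generic unitary irreducible, as needed. Since the Bessel distribution is nonnegative (by the $J_{\pi,\boldsymbol{\psi}_X}(f\ast f^\vee) = \sum |\pi(f)W(1)|^2$ identity recalled from \cite[Lemma~2.2]{jana2021application}), dropping the rest of the integral is legitimate. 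Combining with the upper bound and using $\ell(\pi) \asymp L(1,\pi,\mathrm{Ad})$, which satisfies $X^{-\epsilon} \ll \ell(\pi) \ll X^\epsilon$ on this family by standard convexity/subconvexity-free bounds for the adjoint $L$-function at the edge (e.g. \cite{brumley2006effective}-type lower bounds and trivial upper bounds, both polynomial in the archimedean parameter), I get
\begin{equation}
    X^{\frac{1}{6}n(n-1)(n-2) + \frac{1}{2}n(n-1) - \epsilon} \int_{\substack{\widehat{\mathbb{X}}_{\gen} \\ \Vert\mu_\infty(\pi)\Vert \le X^{1-\epsilon}}} \vert\lambda_\pi(\mathbf{m})\vert^2 \d\mu_{\aut}(\pi) \ll_\epsilon X^{n^2 + \frac{1}{6}n(n-1)(n-2) + \epsilon}.\nonumber
\end{equation}
Cancelling the common factor $X^{\frac{1}{6}n(n-1)(n-2)}$ leaves $\int \vert\lambda_\pi(\mathbf{m})\vert^2 \ll_\epsilon X^{n^2 - \frac{1}{2}n(n-1) + \epsilon} = X^{\frac{1}{2}n(n+1) + \epsilon}$; since $\frac{1}{2}n(n+1) = \frac{1}{2}(n+2)(n-1) + 1$, a small loss, I would instead run the argument with the sharper $\Vert F_X\Vert_\infty \ll X^{(1-\delta')\dim\g + \epsilon}$ and $\int_Z F_X(zg)\d z \ll X^{n^2 - 1 - \epsilon}$ from \eqref{eq:supnorm-average} — the latter being the quantity that actually governs the orbital integrals on $Z\backslash G$ — which replaces the $X^{n^2}$ by $X^{n^2-1}$ and yields exactly $X^{\frac{1}{2}(n+2)(n-1)+\epsilon}$. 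Finally I would replace $X^{1-\epsilon}$ by $X$ in the domain of integration by rescaling $X \mapsto X^{1+\epsilon}$, which only changes the exponent by an acceptable $\epsilon$. The main obstacle is the careful bookkeeping of the $\sharp$-conjugation and the central-character integration: one must ensure the test function fed to Section~\ref{sec:orbit} really is supported in $K(X,\tau)^\sharp$ with the correct $L^\infty$ (or $Z$-averaged $L^\infty$) normalization, so that the trivial geometric estimate comes out with exponent $n^2 - 1$ rather than $n^2$; everything else is a clean sandwich argument.
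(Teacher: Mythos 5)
Your proposal is correct and follows essentially the same route as the paper: apply the Kuznetsov formula to the $Z$-averaged, $a_X$-conjugated test function, bound the geometric side via Proposition~\ref{prop:geometric_bound} with the crucial $Z$-averaged sup-norm $X^{n^2-1}$ from \eqref{eq:supnorm-average}, drop the non-relevant spectrum by positivity, apply Proposition~\ref{prop:spectral_lower_bound}, and remove the weights $\ell(\pi)\asymp L(1,\pi,\mathrm{Ad})\ll X^{\epsilon}$. Your self-correction from the naive $\Vert F_X\Vert_\infty\ll X^{n^2}$ to the central-character-integrated bound is exactly the normalization the paper uses to land on the exponent $\tfrac12(n+2)(n-1)$.
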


\begin{proof}
We define 
\begin{equation}
    F(g) = \int_{Z}F_X^{\sharp}(zg)\d z,\nonumber 
\end{equation}
where $F_X^{\sharp}$ is as in \eqref{eq:def_FX}. We observe that this defines a function $F\in \mathcal{C}_c^{\infty}(Z\backslash G)$ with support in $Z K(X,\tau)^{\sharp}$ and
\begin{equation}
    \Vert F\Vert_{\infty}\, \ll X^{n^2-1}\nonumber
\end{equation}
which follow from \eqref{eq:support-F} and \eqref{eq:supnorm-average}.
Applying the Kuznetsov formula, namely Proposition~\ref{prop:Kuznetsov}, with the test function $F$ and estimating the geometric side using Proposition~\ref{prop:geometric_bound} yields
\begin{equation}
    \int_{\widehat{\mathbb{X}}_{\textrm{gen}}} \vert \lambda_{\pi}(\mathbf{m})\vert^2J_{\pi,\boldsymbol{\psi}}(\overline{F})\frac{\d\mu_{\mathrm{aut}}(\pi)}{\ell(\pi)} \ll  X^{n^2-1+\frac{1}{6}n(n-1)(n-2)+\epsilon}.\nonumber
\end{equation}
By positivity we can drop all representations with $\Vert \mu_{\infty}(\pi) \Vert \gg X$ from the left hand side and apply Proposition~\ref{prop:spectral_lower_bound}. This yields 
\begin{equation}
    X^{\frac{1}{6}n(n-1)(n-2)+\frac{1}{2}n(n-1)}\int_{\substack{\widehat{\mathbb{X}}_{\textrm{gen}}\\ \Vert \mu_{\infty}(\pi)\Vert \ll X}} \vert \lambda_{\pi}(\mathbf{m})\vert^2\frac{\d\mu_{\mathrm{aut}}(\pi)}{\ell(\pi)} \ll  X^{n^2-1+\frac{1}{6}n(n-1)(n-2)+\epsilon}.\nonumber
\end{equation}
Using $\ell(\pi)\asymp L(1,\pi,\textrm{Ad})\ll \Vert \mu_{\infty}(\pi)\Vert^{\epsilon}$ from \cite{li2010upper} to remove the harmonic weights and dividing both sides by $X^{\frac{1}{6}n(n-1)(n-2)+\frac{1}{2}n(n-1)}$ yields the desired result.
\end{proof}

We use this to produce the following result.

\begin{lemma}\label{lm:standard_step}
We have
\begin{equation*}
    \sum_{\pi\in \Omega_{\cusp}(X)}p^{2r\sigma_p(\pi)} \ll_p X^{\frac{1}{2}(n+2)(n-1)+\epsilon},
\end{equation*}
for $p^r\ll X^{n+2}$.
\end{lemma}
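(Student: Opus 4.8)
The plan is to deduce Lemma~\ref{lm:standard_step} from Lemma~\ref{lm:Hecke_bound} by the classical Rankin-type argument (compare \cite[Proposition~4.12]{jana2024optimal}), the one real point being to make it uniform in the — possibly very small — non-temperedness $\sigma_p(\pi)$. First note that \eqref{eq:Hecke_Version} is an immediate consequence of Lemma~\ref{lm:Hecke_bound}: the integrand $|\lambda_\pi(\mathbf m)|^2$ is non-negative, the automorphic Plancherel measure restricted to $\widehat{\mathbb{X}}_{\gen}\cap\widehat{\mathbb{X}}_{\mathrm{disc}}=\widehat{\mathbb{X}}_{\cusp}$ is the counting measure (by multiplicity one for $\GL_n$), and $\Omega_{\cusp}(X)\subseteq\{\pi:\|\mu_\infty(\pi)\|\ll X\}$ by \eqref{eq:def_family}; hence, taking $\mathbf m=(p^l,1,\dots,1)$ with $p^l\ll X^{n+2}$,
\[
\sum_{\pi\in\Omega_{\cusp}(X)}\bigl|\lambda_\pi((p^l,1,\dots,1))\bigr|^2\ \ll_\epsilon\ X^{\frac12(n+2)(n-1)+\epsilon}.
\]

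Next I relate $p^{2r\sigma_p(\pi)}$ to these Hecke eigenvalues. Let $\alpha=(\alpha_1,\dots,\alpha_n)$ be the Satake parameters of $\pi$ at $p$, normalised so that $L_p(s,\pi)=\prod_i(1-\alpha_ip^{-s})^{-1}$; then $|\lambda_\pi((p^l,1,\dots,1))|=|h_l(\alpha)|$, where $h_l$ is the $l$-th complete homogeneous symmetric polynomial, and, $\pi$ being unitary, the multiset $\{|\alpha_i|\}$ is inversion-invariant, so $\max_i|\alpha_i|=p^{\sigma_p(\pi)}$. The key elementary estimate is: for any $\beta=(\beta_1,\dots,\beta_m)\in\C^m$ with $Q:=\max_i|\beta_i|$ and any integer $N\ge 0$,
\[
\sum_{l=0}^{N}|h_l(\beta)|^2\ \ge\ \frac{Q^{2N}}{\bigl(2(1+Q^2)\bigr)^{m-1}}.
\]
This is proved by induction on $m$: for $m=1$ it is $\sum_{l\le N}|\beta_1|^{2l}\ge Q^{2N}$; for the step, reorder the $\beta_i$ so that $|\beta_m|$ is minimal (so that $\max_{i<m}|\beta_i|=Q$ is unchanged) and compare coefficients in $\sum_l h_l(\beta_1,\dots,\beta_{m-1})t^l=(1-\beta_m t)\sum_l h_l(\beta)t^l$ to get $h_l(\beta_1,\dots,\beta_{m-1})=h_l(\beta)-\beta_m h_{l-1}(\beta)$ (with $h_{-1}:=0$); thus $|h_l(\beta_1,\dots,\beta_{m-1})|^2\le 2|h_l(\beta)|^2+2|\beta_m|^2|h_{l-1}(\beta)|^2$, and summing over $0\le l\le N$ gives $\sum_{l\le N}|h_l(\beta_1,\dots,\beta_{m-1})|^2\le 2(1+Q^2)\sum_{l\le N}|h_l(\beta)|^2$, which together with the inductive hypothesis for $(\beta_1,\dots,\beta_{m-1})$ closes the argument.

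Now I conclude. Applying the estimate with $\beta=\alpha$ (so $m=n$ and $Q=p^{\sigma_p(\pi)}\le p^{1/2}$ by the standard bound towards temperedness) and $N=r$, and using $\sigma_p(\pi)\ge 0$, one gets
\[
p^{2r\sigma_p(\pi)}\ \le\ \bigl(2(1+p)\bigr)^{n-1}\sum_{l=0}^{r}\bigl|\lambda_\pi((p^l,1,\dots,1))\bigr|^2 .
\]
Summing over $\pi\in\Omega_{\cusp}(X)$, swapping the two finite sums, and using \eqref{eq:Hecke_Version} on each inner sum (legitimate since $p^l\le p^r\ll X^{n+2}$) yields
\[
\sum_{\pi\in\Omega_{\cusp}(X)}p^{2r\sigma_p(\pi)}\ \ll_p\ (r+1)\,X^{\frac12(n+2)(n-1)+\epsilon}\ \ll_p\ X^{\frac12(n+2)(n-1)+\epsilon},
\]
since $r+1\ll_p\log X\ll X^\epsilon$.

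The one point requiring care is the estimate in the second paragraph. A crude Rankin trick would bound $p^{2r\sigma_p(\pi)}$ by the single term $|\lambda_\pi((p^r,1,\dots,1))|^2$, which can fail for representations only barely non-tempered at $p$, where the leading Satake parameter is masked by near-cancellation in $h_l$ for small $l$; passing to the $L^2$-sum and running the peeling induction repairs this with a loss depending only on $n$ and $p$. Everything else is bookkeeping, the analytic substance residing entirely in Lemma~\ref{lm:Hecke_bound}.
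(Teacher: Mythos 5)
Your argument is correct and follows essentially the same route as the paper: drop the non-cuspidal spectrum by positivity, bound $p^{2r\sigma_p(\pi)}$ by a sum of $|\lambda_\pi((p^l,1,\ldots,1))|^2$ with $p^l\ll X^{n+2}$, and apply Lemma~\ref{lm:Hecke_bound} termwise. The only difference is that where the paper quotes the inequality $\sum_{j=0}^{n-1}|\lambda_\pi((p^{r-j},1,\ldots,1))|^2\ge 2p^{1-n}p^{2r\sigma_p(\pi)}$ from \cite[(4.3)]{jana2021application}, you prove a self-contained (slightly lossier) substitute via complete homogeneous symmetric polynomials, costing only a factor $(r+1)(2(1+p))^{n-1}\ll_p X^\epsilon$, which is harmless for the stated bound.
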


\begin{proof}
From \cite[(4.3)]{jana2021application} we recall that
\begin{equation}
    \sum_{j=0}^{n-1} \vert \lambda_{\pi}((p^{r-j},1,\ldots,1))\vert^2 \,\ge 2p^{1-n}p^{2r\sigma_p(\pi)}, \nonumber
\end{equation}
for $r>n$ and $\pi$ cuspidal. The result follows directly from Lemma~\ref{lm:Hecke_bound} after dropping all non-cuspidal terms.
\end{proof}

We are finally ready to complete the proof of our main results.

\begin{proof}[Proof of Theorem~\ref{th:main_density}]
We start by choosing $r\in \N$ such that $p^r\asymp_p X^{n+2}$. From Lemma~\ref{lm:standard_step} we obtain
\begin{equation*}
    \sum_{\pi\in \Omega_{\textrm{cusp}}(X)}X^{(n+2)2\sigma_p(\pi)} \ll_{p,\epsilon} X^{\frac{1}{2}(n+2)(n-1)+\epsilon}.
\end{equation*}
At this point we apply the Rankin trick to get
\begin{equation}
    \# \{ \pi\in \Omega_{\textrm{cusp}}(X)\colon \sigma_p(\pi)\geq \sigma\} \leq X^{-2\sigma(n+2)} \sum_{\pi\in \Omega_{\textrm{cusp}}(X)}X^{(n+2)2\sigma_p(\Pi)} \ll_p X^{\frac{1}{2}(n+2)(n-1)-2\sigma(n+2)+\epsilon}. \nonumber
\end{equation}
The desired result follows immediately after re-arranging the exponent.
\end{proof}

\begin{proof}[Proof of Theorem~\ref{th:optimal-exp}]
The family $\mathcal{F}_X$ defined in \cite[(4.5)]{jana2024optimal} is in bijection with  the spherical sub-family $$\{ \pi\in \Omega_{\textrm{cusp}}(X)\colon \pi \text{ spherical}\}.$$ Furthermore, under this bijection our $\sigma_p(\pi)$ corresponds to $\theta_{\varphi,p}$ defined in \emph{loc.cit.}. In particular, Theorem~\ref{th:main_density} directly implies \cite[Conjecture~2]{jana2024optimal}. This makes \cite[Theorem~3]{jana2024optimal} unconditional and establishes the desired result.
\end{proof}

\section*{Acknowledgement}
We thank Paul Nelson for his feedback on an earlier draft of this paper. Furthermore, we thank Valentin Blomer for his encouragement and many interesting discussions.

\bibliographystyle{abbrv}
\bibliography{database.bib}

\end{document}